\documentclass[a4paper,12pt]{article}
\usepackage{amsmath,amsfonts,amssymb,amsthm,amscd}
\usepackage{pxfonts}
\usepackage{mathrsfs}
\usepackage{color}
\usepackage[all]{xy}
\usepackage{stmaryrd}
\newcommand{\field}[1]{\mathbb{#1}}

\newcommand{\Q}{\field{Q}}

\newtheorem{theorem}{Theorem}[section]
\newtheorem{lemma}[theorem]{Lemma}
\newtheorem{proposition}[theorem]{Proposition}
\newtheorem{corollary}[theorem]{Corollary}
\theoremstyle{definition}
\newtheorem{example}[theorem]{Example}
\newtheorem{definition}[theorem]{Definition}

\theoremstyle{remark}
\newtheorem{remark}[theorem]{Remark}

\DeclareMathOperator{\gal}{Gal}
\DeclareMathOperator{\irr}{Irr}
\allowdisplaybreaks
\numberwithin{equation}{section}
\begin{document}
\title{On multiply induced characters}
\author{Masanari Kida\thanks{This work is supported by JSPS KAKENHI Grant Number 25K06962.}, Kyohei Matsukawa,
 and Hikari Watanabe}
\date{\today}
\maketitle
\renewcommand{\thefootnote}{}
\footnote{2020 \textit{Mathematics Subject Classification.} 20C15, 11R42, 11R21.}
\footnote{\emph{Key words and phrases}: multiply induced characters, Camina triple, Artin $L$-function, isoclinism. }
\renewcommand{\thefootnote}{\arabic{footnote}}
\setcounter{footnote}{0}

\begin{abstract}
    We investigate finite group structures that yield complex irreducible characters induced simultaneously 
    from two or more distinct subgroups. When such a group is realized as the Galois group of a Galois extension
    of number fields, this phenomenon implies that
    multiple subfields share the same Artin $L$-function.
\end{abstract}

\section{Introduction} \label{sec:1}
Let $G$ be a finite group. We say that a complex character $\chi$ of $G$ is \emph{multiply induced}
if there are proper subgroups $H_1$ and $H_2$ of $G$ such that $\chi$ is induced from a character $\psi_1$ 
of $H_1$ and simultaneously from $\psi_2$ of $H_2$.

As detailed in the next section, if $G$ is realized as a Galois group of a number field extension  
and admits a multiply induced character, then the induction property of the Artin $L$-function implies 
a coincidence of $L$-functions. To be more specific,
the $L$-functions associated with the characters $\psi_1$ and $\psi_2$ over the subfields corresponding to $H_1$ and $H_2$  
are identical. Consequently, these subfields are 
expected to  share significant arithmetic properties. Thus, the existence of multiply induced characters
is a phenomenon of interest not only in group representation theory but also in algebraic number theory.

A classical and well-known example of multiply induced characters comes 
from a Gassmann triple $(G,H_1,H_2)$, which implies a character identity
$(1_{H_1})^G = (1_{H_2})^G$. In the context of number theory, 
such a triple results in two
number fields sharing the Dedekind zeta function, and these number fields are called arithmetically 
equivalent (see Section \ref{sec:2}). 
Multiply induced characters provide a natural 
generalization of this classical equivalence.

While many examples of groups with multiply induced characters have been found, 
to the best of our knowledge, 
this is the first systematic study of irreducible characters 
that are multiply induced from distinct proper subgroups.
In particular, the purpose of this paper is 
to establish a necessary and sufficient condition for the existence of 
multiply induced characters from maximal normal subgroups 
under the assumption of the existence of a Camina triple,
whose definition is as follows (\cite{MR3150725}):

\begin{definition} \label{def:1.1}
    Let $G$ be a finite group.
A triple $(G, N, M)$ consisting of nontrivial proper normal subgroups $N$ and $M$ with $N \ge M$ of $G$ is 
called a \emph{Camina triple} if every $g \in G \backslash N$ is conjugate to all elements of $gM$.
\end{definition}

If an induced character is not irreducible,
the corresponding Artin $L$-function factors into a product of $L$-functions attached
 to its irreducible constituents. 
 Consequently, our primary interest lies in whether an irreducible character 
 can be multiply induced. 
 To guarantee the irreducibility of such induced characters,
  the structural rigidity of a Camina triple plays an essential role
   (as demonstrated in Lemma 3.2).

Our main result is the following.
\begin{theorem} \label{thm:1.1}
    Let $G$ be a finite solvable group and $N_1$ and $N_2$ non-trivial maximal normal subgroups 
    of $G$. Let $H=N_1 \cap N_2$.
    Assume that $(G,N_1, M_1)$ is a Camina triple. If
    there exist $\psi \in \irr (N_1 ) $ and $\alpha \in \irr ( H  \,|\, \psi)$
    satisfying one of the following conditions:
    \begin{enumerate}
        \item \label{1.1.1} if $\gcd (|G/N_1|, |G/N_2 |)=1$, then we assume that $I_G (\alpha ) =H$;
        \item \label{1.1.2} if $\gcd (|G/N_1|, |G/N_2 |) \ne 1$, then we assume that $I_G (\alpha )$ coincides with neither 
        $N_1$ nor $N_2$,
    \end{enumerate}
 then there exists $\gamma \in \irr (N_2)$ such that $\gamma^G = \psi^G$.

 Conversely, if there exists $\gamma \in \irr (N_2)$ satisfying $\gamma^G = \psi^G$ for a given $\psi \in \irr (N_1 \, | \, M_1)$, then
  $\alpha \in \irr (H \, | \, \psi) \cap \irr (H \, | \, \gamma ) $ must satisfy one of the above conditions \ref{1.1.1} and \ref{1.1.2}.
\end{theorem}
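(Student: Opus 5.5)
The plan is to reduce the existence of $\gamma$ to a support condition on $\chi:=\psi^G$, and then translate that condition into a statement about the inertia group $I_G(\alpha)$ using a norm computation for $\chi_H$.

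\emph{Setup.} Since $G$ is solvable and $N_1,N_2$ are distinct maximal normal subgroups, $G/N_i$ is cyclic of prime order $p_i$. Moreover $G/H$ embeds in $G/N_1\times G/N_2$ and has order $p_1p_2$. So $G/H$ is cyclic when $p_1\ne p_2$, and $G/H\cong C_p\times C_p$ when $p_1=p_2=p$. Throughout I assume $\psi\in\irr(N_1\,|\,M_1)$; this is how I read the forward direction, since the Camina triple is only useful for such $\psi$. Then Lemma 3.2 gives that $\chi=\psi^G$ is irreducible. Because $\chi$ is induced from the normal subgroup $N_1$, it vanishes on $G\setminus N_1$, and therefore $[\chi_{N_1},\chi_{N_1}]=[G:N_1]=p_1$.

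\emph{Key criterion.} I will show that there is $\gamma\in\irr(N_2)$ with $\gamma^G=\chi$ if and only if $\chi$ vanishes on $G\setminus H$, which is equivalent to $[\chi_H,\chi_H]=[G:H]$.

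For the ``if'' direction, suppose $\chi$ vanishes off $H$. Then $\chi$ vanishes off $N_2$, so $\chi\lambda=\chi$ for every $\lambda\in\irr(G/N_2)$. By Gallagher's theorem, $\chi_{N_2}$ is then not irreducible. Since $[G:N_2]$ is prime, Clifford theory gives $\chi_{N_2}=\sum_{i=1}^{p_2}\gamma_i$ with the $\gamma_i$ distinct conjugates, and hence $\chi=\gamma_1^G$.

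For the ``only if'' direction, $\gamma^G$ vanishes off $N_2$ and $\psi^G$ vanishes off $N_1$; as $\gamma^G=\psi^G$, the common character vanishes off $N_1\cap N_2=H$.

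The norm reformulation follows by comparing $\sum_{g\in H}|\chi(g)|^2$ with $\sum_{g\in G}|\chi(g)|^2=|G|$: the two agree exactly when $\chi$ is zero outside $H$, and then $[\chi_H,\chi_H]=|G|/|H|=[G:H]$.

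\emph{Clifford bookkeeping.} Let $\alpha\in\irr(H\,|\,\psi)$ and put $I=I_G(\alpha)$. Then $\chi_H=e\sum_{t\in G/I}\alpha^t$, so
\[
[\chi_H,\chi_H]=e^2[G:I], \qquad \chi(1)=p_1\psi(1)=e[G:I]\alpha(1).
\]
We always have $[\chi_H,\chi_H]\le [G:N_1][\chi_{N_1},\chi_{N_1}]\cdot(\text{at most})$, and in any case $[\chi_H,\chi_H]\le [G:H]$.

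\emph{Case 1: $p_1\ne p_2$.} If $I=H$, then $\alpha^G$ is irreducible. It follows that $\psi=\alpha^{N_1}$, by comparing degrees via Frobenius reciprocity, and then $\gamma:=\alpha^{N_2}$ satisfies $\gamma^G=\alpha^G=\chi$. Conversely, suppose $\gamma$ exists, so $e^2[G:I]=p_1p_2$ by the criterion. Since $p_1p_2$ is squarefree, this forces $e=1$ and $I=H$.

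\emph{Case 2: $p_1=p_2=p$.} By the criterion, $\gamma$ exists iff $e^2[G:I]=p^2$. The possible inertia groups are $H$, $G$, or one of the $p+1$ subgroups of order $p$ in $G/H$, two of which are $N_1/H$ and $N_2/H$.
\begin{itemize}
\item If $[G:I]=p$, then $e^2p=p^2$ is impossible. This excludes $I=N_1$ and $I=N_2$ for the converse direction.
\item For the forward direction with $I\notin\{N_1,N_2\}$, I must rule out a third subgroup $K$ of index $p$. The degree identity gives $e[G:I]\alpha(1)=p\,\psi(1)$. Combined with $[\chi_H,\chi_H]\le p^2$ and $[\chi_{N_1},\chi_{N_1}]=p$, I expect this to force $e=1$ when $I=H$ and $e=p$ when $I=G$. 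In both cases $e^2[G:I]=p^2$.
\item The case $I=K$ must be excluded by the same degree/norm comparison. Here $\alpha$ extends to $K$ because $K/H$ is cyclic, and induction from $K$ is then used to check that $\chi$ irreducible is incompatible with $I=K$.
\end{itemize}

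\emph{Main obstacle.} I expect the hardest step to be exactly this exclusion of the ``diagonal'' inertia groups $K$ in the $C_p\times C_p$ case, together with pinning down $e$ when $I=G$. The route I will try first is to restrict $\chi$ in stages, $G\to N_1\to H$, using that $\chi_{N_1}$ is the sum of the $p$ distinct conjugates of $\psi$. This should let me compute $\psi_H$ directly in terms of $I_{N_1}(\alpha)=I\cap N_1$.
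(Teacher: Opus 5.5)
Your proposal is correct. Your reading of the forward direction with $\psi\in\irr(N_1\,|\,M_1)$ is the right one: the paper's proof uses exactly this, and without it the statement fails. For $p_1=p_2$ take $\psi=1_{N_1}$ and $\alpha=1_H$; then $\psi^G$ equals $p$ on $N_1\setminus H$, where every $\gamma^G$ vanishes.

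Your route differs genuinely from the paper's. The paper never looks at values of $\chi=\psi^G$. It applies the multiplicativity relation of Lemma~\ref{lem:3.1.5} along both chains $G\ge N_1\ge H$ and $G\ge N_2\ge H$, and uses that all prime-index ramification indices equal $1$. This yields \eqref{eq:3.6} and \eqref{eq:3.9}, which tie together $e(G/H)$, $I_G(\alpha)$, $I_{N_i}(\alpha)$ and $I_G(\gamma)$ for a chosen $\gamma\in\irr(N_2\,|\,\alpha)$. The forward direction then shows $I_G(\gamma)=N_2$ case by case. The converse derives $e(G/H)=[I_G(\alpha):I_{N_1}(\alpha)]=[I_G(\alpha):I_{N_2}(\alpha)]$ and argues with the gcd.

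You instead prove a support criterion: $\gamma$ exists if and only if $\chi$ vanishes off $H$, if and only if $e^2[G:I_G(\alpha)]=[G:H]$. After that the converse is pure arithmetic. Since $p_1p_2$ is squarefree, $e=1$ and $I_G(\alpha)=H$; and $e^2p\ne p^2$ rules out every inertia group of index $p$.

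Your approach buys two things.
\begin{itemize}
\item $\gamma^G=\psi^G$ comes out directly, since any constituent of $\chi_{N_2}$ works. The paper's case analysis ends only with $\gamma^G\in\irr(G\,|\,\alpha)$, and leaves implicit why that character is $\psi^G$ when $I_G(\alpha)=G$.
\item It makes visible that the ``diagonal'' case $H<I_G(\alpha)<G$ with $I_G(\alpha)\ne N_1,N_2$ is vacuous. The paper treats this as a live case.
\end{itemize}
The paper's bookkeeping, for its part, is uniform and gives the inertia group of $\gamma$ explicitly.

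The two steps you left as expectations each close in one line.
\begin{itemize}
\item If $I_G(\alpha)=G$, the degree identity gives $e\,\alpha(1)=p\,\psi(1)\ge p\,\alpha(1)$, so $e\ge p$. Your norm bound gives $e^2\le p^2$. Hence $e=p$ and $e^2[G:I_G(\alpha)]=p^2$.
\item If $I_G(\alpha)=K$ with $[G:K]=p$ and $K\ne N_1,N_2$, then $I_{N_1}(\alpha)=K\cap N_1=H$, so $\psi=\alpha^{N_1}$ and $\chi=(\alpha^K)^G$. Since $K/H$ is cyclic, $\alpha$ extends to $K$, and $\alpha^K$ is a sum of $p$ distinct extensions. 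So $\chi$ is reducible, contradicting Lemma~\ref{lem:3.1}. Equivalently, $\psi(1)=p\,\alpha(1)$ forces $e=p$, which violates $e^2p\le p^2$.
\end{itemize}

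Finally, the garbled first inequality in your bookkeeping paragraph is not needed; only $[\chi_H,\chi_H]\le[G:H]$ is used.
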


For the notation used in the statement, see the end of this section.
The solvability assumption is essential for our approach, 
as the proof relies on structural properties of maximal normal subgroups.
Thus, under the assumption of the existence of a Camina triple, 
the conditions in Theorem 1.2 are necessary and sufficient, and therefore
Theorem \ref{thm:1.1} works well for inductions from normal subgroups.
On the other hand, there are groups admitting multiply induced characters
from non-normal subgroups and having no Camina triples (see Example \ref{ex:5.7}).

The theorem will be proved in Section \ref{sec:3} after
we give number-theoretic significance and consequence of the theorem in Section \ref{sec:2}.
Although our motivation partly comes from number-theoretic considerations as we will see, 
the main results of this paper are purely character-theoretic and concern 
the structure of induced irreducible characters in finite solvable groups.

In Section \ref{sec:4}, we show the following theorem.

\begin{theorem} \label{thm:1.2}
    If $G$ and $H$ are isoclinic finite groups and there exists a multiply induced irreducible character in $G$,
    then there exists such character also in $H$.
\end{theorem}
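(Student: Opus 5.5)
We transport a multiply induced irreducible character of $G$ to $H$ via the standard character-theoretic correspondence for isoclinic groups. By definition there are isomorphisms $\varphi\colon G/Z(G)\to H/Z(H)$ and $\theta\colon G'\to H'$ compatible with commutators. The standard approach (going back to Hall and used in the character-theoretic treatment of isoclinism) passes through a common central extension. Form the pullback
\[
K=\{(g,h)\in G\times H : \varphi(gZ(G))=hZ(H)\}.
\]
Then $K/(Z(G)\times 1)\cong H$ and $K/(1\times Z(H))\cong G$, and $K$ is isoclinic to both. Thus it suffices to treat two cases: passing from $G$ to a central extension $K$ with $K/Z\cong G$, and passing from $K$ down to a quotient $K/Z'\cong H$ by a central subgroup $Z'$ with $Z'\cap K'=1$. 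Throughout we use that isoclinism preserves $G'$, which is where the irreducible characters of non-linear type are controlled.

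First step (lifting to $K$). Let $\chi\in\irr(G)$ with $\chi=\psi_1^G=\psi_2^G$ for proper subgroups $H_1,H_2$, where $H_1\ne H_2$ (or, in the paper's sense, they are distinct). Inflate everything along $K\to G$. Let $\tilde H_i$ be the preimage of $H_i$, so $\tilde H_i$ contains $Z$ and is proper in $K$. Induction commutes with inflation, so $\tilde\chi=\tilde\psi_i^{K}$, and $\tilde\chi$ is irreducible. Hence $K$ has a multiply induced irreducible character.

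Second step (descending to $H=K/Z'$). This is the main obstacle, because $\chi\in\irr(K)$ need not have $Z'$ in its kernel. We first twist: $Z'\cap K'=1$, so every linear character of $Z'$ extends to a linear character $\lambda$ of $K$ trivial on $K'$. Choose $\lambda$ such that $\lambda|_{Z'}$ equals the inverse of the central character of $\chi$ on $Z'$. Then $\lambda\chi$ is irreducible with $Z'\le\ker(\lambda\chi)$. Moreover
\[
\lambda\chi=(\lambda|_{\tilde H_i}\,\psi_i)^K,
\]
so $\lambda\chi$ is still induced from both subgroups.

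It remains to arrange that the inducing subgroups contain $Z'$, so that we can deflate. Since $Z'$ is central and acts by scalars in $\lambda\chi$, we replace each $\tilde H_i$ by $\tilde H_iZ'$. The character $\lambda\psi_i$ extends to $\tilde H_iZ'$, because $Z'$ is central and acts compatibly by the trivial scalar. Counting degrees, $\chi(1)=[K:\tilde H_i]\psi_i(1)$ forces $[K:\tilde H_iZ']$ to stay consistent with irreducibility of the induced character. We must therefore check that $\tilde H_iZ'$ remains proper. This should follow because an irreducible character induced from $\tilde H_i$ has degree greater than $\psi_i(1)$, so if $\tilde H_iZ'=K$ the character would be induced from a subgroup with central supplement, forcing $[K:\tilde H_i]=1$, which is a contradiction. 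Likewise we must check that the two subgroups $\tilde H_1Z'$ and $\tilde H_2Z'$ remain distinct. If they coincided, we would go back and use the freedom in the choice of $H_1,H_2$, or argue via $G'\cong H'$ that distinctness is detected inside $K'$ where $Z'$ is invisible. I expect this distinctness check to require the most care. Finally, deflate to $H=K/Z'$ to obtain the desired multiply induced irreducible character of $H$.
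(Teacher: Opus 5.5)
Your architecture is sound: inflate along $K\to G$, twist by a linear character $\lambda$ of $K/K'$ to kill the central character on $Z'=Z(G)\times 1$, then deflate along $K\to K/Z'\cong H$. However, the descent step is not actually carried out, and the device you use there is not valid.

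\emph{Replacing $\tilde H_i$ by $\tilde H_iZ'$.} This cannot work if it genuinely enlarges the subgroup. An extension $\hat\psi_i$ of $\lambda\psi_i$ to $\tilde H_iZ'$ induces to a character of degree $[K:\tilde H_iZ']\psi_i(1)<[K:\tilde H_i]\psi_i(1)=\chi(1)$, so it is not $\lambda\chi$. The replacement is harmless only when it is vacuous.

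\emph{Properness and distinctness.} Your properness argument is only a sketch. The distinctness of $\tilde H_1Z'$ and $\tilde H_2Z'$, which you yourself flag as the delicate point, is left unproved. Neither fallback is an argument. In general there is no freedom in the choice of $H_1,H_2$. Distinct subgroups also need not differ inside the derived subgroup: the three subgroups of order $4$ in $D_4$ all contain $D_4'$.

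\emph{The missing idea.} A subgroup from which an irreducible character is induced always contains the center. If $z\in Z(G)\setminus H_i$, then $z$ lies in no conjugate of $H_i$, so $\psi_i^G(z)=0$. But $\chi(z)=\chi(1)\omega(z)\neq 0$ for central $z$ and irreducible $\chi$. Hence $Z(G)\le H_i$, so $Z'=Z(G)\times 1\le\tilde H_i$ and no replacement is needed. Then $\tilde H_1/Z'$ and $\tilde H_2/Z'$ are automatically proper and distinct in $K/Z'\cong H$. Moreover, for $z\in Z'$ one has $\chi(z)=[K:\tilde H_i]\psi_i(z)$, so $\psi_i(z)=\psi_i(1)\omega(z)$ and $\lambda\psi_i$ is trivial on $Z'$. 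Deflating $\lambda\chi=(\lambda\psi_i)^K$ then finishes the proof.

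The paper makes this same observation, right after the proof of Proposition~\ref{prop:4.3}, to bring the inducing subgroups within reach of that proposition. With it inserted, your argument is complete. It is also more elementary than the paper's, which passes through a stem group and the Beyl--Tappe correspondence of Proposition~\ref{prop:4.2}. Your twist-and-deflate step is essentially the mechanism behind that correspondence, carried out directly in the fibre product.
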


We also prove 
that there exists a one-to-one correspondence between Camina triples of isoclinic groups $G$ and $H$ 
(see Theorem \ref{prop:4.5}).

In Section \ref{sec:5}, we give explicit examples of Theorem \ref{thm:1.1}. 

Throughout this paper, all groups considered are finite groups
and the following notation will be used.
For a finite group $G$, we denote by $Z(G)$ the center of $G$ and by
$G'$ the commutator subgroup of $G$.
If a group is designated by the double $(n,k)$, then it always means a GAP ID in 
the SmallGroups library, namely the group is the $k$-th group of order $n$.

All representations appearing in this paper are linear complex representations.
We denote by $\irr (G)$ the set of the irreducible characters of $G$ and
$\irr (G)_k$ the subset of degree-$k$ characters in $\irr (G)$.
Let $H$ be a subgroup of $G$. For $\chi \in \irr  (G)$, we denote by 
$\chi_H$ the restriction of $\chi$ to $H$ and also for $\xi \in \irr (H)$,
by $\xi^G$ the induction of $\xi$ to $G$. 
We define  
\begin{align*}
& \irr (G \,|\, \xi) = \{ \chi \in \irr (G) \mid (\chi , \xi^G) >0 \}, \\
& \irr (H \,|\, \chi ) = \{ \xi \in \irr (H) \mid (\chi_H, \xi ) >0 \},   \\
& \irr (G \,|\, N ) = \irr (G) - \irr (G/N),
\end{align*}
for a normal subgroup $N$ of $G$ (the set of the irreducible characters of $G$ with its kernel not contained in $N$). 
If $G$ acts on a character $\psi$, we denote by $I_G (\psi ) = \{ g \in G \mid \psi^g = \psi \}$ the 
stabilizer (the inertia group) of $\psi $.

Finally, a $G$-extension is a Galois extension over the field $\Q$ of rationals
whose Galois group is isomorphic to $G$.

\section{Number-theoretic background and consequences} \label{sec:2}
In this section, we illustrate number-theoretic background that motivates our study on multiply induced characters.
We first recall the definition of Artin $L$-function. 

\begin{definition}[\protect{\cite{MR3069563}}]
Let $K/k$ be a Galois extension of number fields with Galois group $G$.
Let $\rho $ be a finite-dimensional linear representation of $G$ on a complex vector space $V$.
The Artin $L$-function is defined by the Euler product
\[
L( s, \rho , K/k) = \prod_{\mathfrak{p}}  \det 
\left( \left. 1 - \frac{\rho(\mathrm{Frob}_{\mathfrak{P}})}{N \mathfrak{p}^{s}} \right| 
V^{I_{\mathfrak{P}}}\right)^{-1},
\]
where $\mathfrak{p}$ runs through all the prime ideals of $k$, $\mathrm{Frob}_{\mathfrak{P}}$ is 
the Frobenius automorphism at a fixed prime $\mathfrak{P}$ of $K$ lying above $\mathfrak{p}$, and $I_{\mathfrak{P}}$ is 
the inertia group of $\mathfrak{P}$.
\end{definition}

The infinite product defining the Artin $L$-function converges absolutely and uniformly in $\mathrm{Re} (s) >1$.
The $L$-function has a meromorphic continuation to the whole complex plane and 
it is conjectured to be entire if $\rho $ is non-trivial.

Roughly speaking, the Artin $L$-function encodes the behavior of a prime decomposition 
in $K/k$, that is, which conjugacy class the Frobenius automorphism lies in.

If $K/k$ is an abelian extension, the above Artin $L$-function coincides with the Hecke $L$-function
associated with certain ray class character. If $\rho$ is a regular representation, 
the Artin $L$-function coincides with the Dedekind zeta function.

Let $K_1$ and $K_2$ be number fields of finite degree over the rational field $\Q$.
If the Dedekind zeta functions of $K_1$ and $K_2$ coincide, then the fields $K_1$ and $K_2$ are said to be
\emph{arithmetically equivalent} (over $\Q$). 
It is known that arithmetically equivalent fields share the same Galois closure over $\Q$, and thus, 
if $K_1$ and $K_2$ are arithmetically equivalent, then they lie 
in a common Galois closure $L$ over $\Q$.
Let $G=\gal (L/\Q)$ and $H_1$ and $H_2$ be subgroups of $G$ corresponding to $K_1$ and $K_2$.
In his paper \cite{zbMATH02585915}, Gassmann reduced this arithmetic 
problem of finding arithmetically equivalent fields
to a purely group theoretic setting. In fact, he proved that 
$K_1$ and $K_2$ are arithmetically equivalent if and only if the groups $H_1$ and $H_2$ 
are Gassmann equivalent in $G$; that is, the permutation characters satisfy $(1_{H_1})^G = (1_{H_2})^G$.
In our terminology, the character is multiply induced from $H_1$ and $H_2$.
Gassmann equivalence follows from the fact that the Dedekind zeta function of $K_1$ is 
nothing but the Artin $L$-function associated with $(1_{H_1})^G$ 
This is the first occurrence of multiply induced characters in number theory.

In 1925, Hecke found that, in a $D_4$-extension (of degree $8$), the unique degree-$2$ character of the Galois group
is induced from the three subgroups corresponding to the quadratic subfields
and used to define
theta series associated with indefinite quadratic forms \cite{zbMATH02591849}.
As is noticed above, Hecke's discovery can be stated in terms of Artin $L$-functions
by virtue of the following induction property of 
Artin $L$-function.
 
\begin{proposition}[\protect{\cite[Satz 1]{MR3069563}}] \label{prop:2.1}
Let $L/K$ be a Galois extension with Galois group $G$ and $M$ an intermediate field of $L/K$ with 
$H=\gal (L/M)$. If $\psi $ is a character of $H$, then 
\[
L (s, \psi^G , L/K ) = L (s, \psi , L/M).
\]
\end{proposition}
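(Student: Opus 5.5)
The statement to prove is Proposition~\ref{prop:2.1}, the induction property of Artin $L$-functions. The plan is to prove it Euler factor by Euler factor. Fix a prime $\mathfrak{p}$ of $K$. It suffices to show that the local factor of $\psi^G$ at $\mathfrak{p}$ equals the product, over the primes $\mathfrak{q}$ of $M$ above $\mathfrak{p}$, of the local factors of $\psi$ at $\mathfrak{q}$. Each such local factor is written with $N\mathfrak{q}^{-s}=N\mathfrak{p}^{-f_{\mathfrak q}s}$, where $f_{\mathfrak q}$ is the residue degree of $\mathfrak q$ over $\mathfrak p$.

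To set this up, I will replace each local factor by its logarithm. For a representation $V$ of $G$ with Frobenius $\sigma$ and inertia group $I$ at a prime $\mathfrak P$ of $L$ above $\mathfrak p$,
\[
\log\det\bigl(1-\rho(\sigma)T \bigm| V^{I}\bigr)^{-1}=\sum_{m\ge1}\frac{T^m}{m}\,\frac{1}{|I|}\sum_{\tau\in I}\chi(\sigma^m\tau).
\]
This holds because $\frac{1}{|I|}\sum_{\tau\in I}\rho(\tau)$ is the projection onto $V^I$, and $\sigma$ normalizes $I$. It is therefore enough to compare the numbers $\chi(\sigma^m I)$, meaning averages of $\chi$ over the coset $\sigma^m I$, for every $m$.

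Next I describe the primes of $M$ above $\mathfrak p$. Let $D=\langle\sigma\rangle I$ be the decomposition group. These primes correspond to the double cosets $H\backslash G/D$. For a representative $g$, the prime $\mathfrak q_g$ lies below $g\mathfrak P$. Its decomposition group in $H$ is $H\cap gDg^{-1}$ and its inertia group is $H\cap gIg^{-1}$. Its residue degree $f_g$ is the smallest positive $f$ with $g\sigma^f g^{-1}\in H\,gIg^{-1}$. The Frobenius of $\mathfrak q_g$ is then $g\sigma^{f_g}g^{-1}$, taken modulo inertia.

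The main step is the Mackey-type computation. Write $\psi^G(x)=\frac{1}{|H|}\sum_{y\in G,\;yxy^{-1}\in H}\psi(yxy^{-1})$ and average over $x\in\sigma^mI$. Grouping the $y$ by double coset $HgD$ should give
\[
\psi^G(\sigma^m I)=\sum_{g}\;[f_g\mid m]\; f_g\;\psi\bigl((g\sigma^{m}g^{-1})\,(H\cap gIg^{-1})\bigr).
\]
Substituting this into the logarithmic series, with $m=f_g k$, the factor $f_g$ cancels the change $\frac{1}{m}=\frac{1}{f_g k}$. The result is exactly the logarithm of the $\psi$-factor at $\mathfrak q_g$ with $T=N\mathfrak p^{-s}$ raised to the power $f_g$, which proves the proposition.

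I expect this Mackey bookkeeping to be the main obstacle. One must count how many $y\in HgD$ actually land $\sigma^m\tau$ in $H$. One must also check that averaging over $I$ in $G$ becomes averaging over $H\cap gIg^{-1}$ with the correct multiplicity $|D|/|H\cap gDg^{-1}|=e f/(e_g f_g)$, and that this multiplicity is consistent with the factor $f_g$ above. I would first do the unramified case, $I=1$, where this reduces to counting the fixed points of $\sigma^m$ on $H\backslash G$ together with $\psi$-weights. The ramified case should then follow by the same count applied to the cosets of $I$.
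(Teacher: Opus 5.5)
Your proposal is correct. The paper does not prove Proposition~\ref{prop:2.1}; it quotes it from Artin. Your argument (logarithms of the Euler factors via inertia-averaged character values, then a double-coset count) is the classical character-theoretic proof of this fact, so the paper offers no competing route to compare with.

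Two details in your bookkeeping need repair, although neither affects the outcome.

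\emph{The averaging set.} Even when $f_g\mid m$, the element $g\sigma^m g^{-1}$ need not lie in $H$. So $\psi$ should be averaged over $H\cap g\sigma^m I g^{-1}$, not over $(g\sigma^mg^{-1})(H\cap gIg^{-1})$. This set is a coset of $H\cap gIg^{-1}$ when $f_g\mid m$ and is empty otherwise, and that emptiness is exactly where the indicator $[f_g\mid m]$ comes from. Likewise, the Frobenius at $\mathfrak q_g$ is any element of $H\cap g\sigma^{f_g}Ig^{-1}$, and its $k$-th power lies in $H\cap g\sigma^{f_gk}Ig^{-1}$ because $\sigma$ normalizes $I$.

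\emph{The multiplicity.} Let $e_g,f_g$ be the ramification index and residue degree of $\mathfrak q_g$ over $\mathfrak p$. Then $|D|/|H\cap gDg^{-1}|=e_gf_g$, not $ef/(e_gf_g)$, and $|H\cap gIg^{-1}|=|I|/e_g$. Write $y=hgd$, using that the map $H\times D\to HgD$ has fibres of size $|H\cap gDg^{-1}|$. Also use that $d\sigma^mId^{-1}=\sigma^mI$ for $d\in D$, since $D/I$ is abelian. Then the count gives
\[
\frac{1}{|I|}\sum_{\tau\in I}\psi^G(\sigma^m\tau)=\sum_{g\in H\backslash G/D}\frac{e_gf_g}{|I|}\sum_{x\in H\cap g\sigma^mIg^{-1}}\psi(x)=\sum_{g\in H\backslash G/D}\frac{f_g}{|H\cap gIg^{-1}|}\sum_{x\in H\cap g\sigma^mIg^{-1}}\psi(x).
\]
This is exactly your formula, read with the corrected averaging set. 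The rest of your argument, with the substitution $m=f_gk$ cancelling $f_g$ against $1/m$, then goes through unchanged. It handles ramified primes exactly, with no Euler factors discarded.
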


Hecke's result is, thus, equivalent to that there are linear characters $\psi_i$ of $\gal (L/M_i)\ (i=1,2,3)$ such that 
\[
L (s, \psi_1 , L/M_1 ) = L (s, \psi_2, L /M_2) = L (s , \psi_3 , L/M_3) 
\]
holds, where $L/\Q $ is a $D_4$-extension, and $M_i$ are the quadratic subfields of $L$.
Note that the above equality holds only up to finite number of Euler factors at ramified primes
(see \cite[VII.10.6]{MR1697859}).
Henceforth, we understand that the equality of abelian $L$-functions is up to finite number of Euler factors.

Since Hecke's paper, such ``coincidence'' of abelian $L$-functions has been studied by several authors:
\cite{MR914305}, \cite{MR3581913}, and \cite{MR4474773}.
The following result supersedes these results.
 
\begin{proposition}[\protect{\cite{10.55937/sut/1685448649}}] \label{prop:2.2}
Let $p$ be a prime and $n$ a positive integer. If $L/\Q$ is a Galois
extension with Galois group isomorphic to an extraspecial group $\mathrm{ES}(n,p)$ of order
$p^{2n+1}$, then there exist $N$ elementary abelian subextensions $M_i$ of $L$ 
of degree $p^n$ and linear characters of $\chi_i$ of $\gal (L/M_i)$
such that $L (s,  \chi_i, L/M_i)$ are identical.
Here $N$ is equal to $\prod_{i=1}^n (p^i +1)$.
\end{proposition}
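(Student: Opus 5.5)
The plan is to identify the subfields $M_i$ with the fixed fields of the maximal abelian subgroups of $G=\mathrm{ES}(n,p)$ containing the center. I would show that a single faithful irreducible character of $G$ is induced from a linear character of each of these subgroups, and then apply Proposition \ref{prop:2.1} to each induction. Let $Z=Z(G)=G'$, which has order $p$, and put $V=G/Z\cong \mathbb{F}_p^{2n}$. The commutator map gives a nondegenerate alternating form $V\times V\to Z\cong\mathbb{F}_p$; this works for $p=2$ as well, since only the alternating commutator form is used. A subgroup $A\ge Z$ is abelian exactly when $A/Z$ is isotropic, so the maximal abelian subgroups containing $Z$ are the preimages of Lagrangian subspaces. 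Each such $A$ has order $p^{n+1}$ and contains $G'=Z$, so it is normal in $G$. Moreover $G/A\cong V/(A/Z)$ is elementary abelian of order $p^n$. Hence the fixed field $M_A$ of $A$ is an elementary abelian extension of $\mathbb{Q}$ of degree $p^n$. Distinct $A$ give distinct fields by Galois theory.

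Next, fix a nontrivial linear character $\lambda$ of $Z$ and let $\chi\in\irr(G)$ be the faithful character of degree $p^n$ lying over $\lambda$. The standard facts are that $\chi$ vanishes off $Z$ and $\chi_Z=p^n\lambda$; these follow from $\sum\chi(1)^2=|G|$ applied to the $p^{2n}$ linear characters and the $p-1$ faithful ones. For each Lagrangian preimage $A$, choose a linear character $\mu_A$ of the abelian group $A$ extending $\lambda$. By Frobenius reciprocity,
\[
(\mu_A^G,\chi)=(\mu_A,\chi_A)=\frac{1}{|A|}\sum_{z\in Z} p^n\lambda(z)\overline{\lambda(z)}=\frac{p^n\cdot p}{p^{n+1}}=1 .
\]
Since $\mu_A^G(1)=[G:A]=p^n=\chi(1)$, this gives $\mu_A^G=\chi$. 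Proposition \ref{prop:2.1} then yields $L(s,\mu_A,L/M_A)=L(s,\chi,L/\mathbb{Q})$ for every $A$, so all these $L$-functions coincide.

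The remaining step, and the only real computation, is to count the Lagrangian subspaces of a $2n$-dimensional symplectic space over $\mathbb{F}_p$ and show the count is $N=\prod_{i=1}^n(p^i+1)$. I would argue by induction on $n$ using a double count of pairs (isotropic line $\ell$, Lagrangian $W\supset\ell$). Every line is isotropic, so there are $(p^{2n}-1)/(p-1)$ choices of $\ell$. The Lagrangians containing $\ell$ correspond bijectively to Lagrangians of the $(2n-2)$-dimensional space $\ell^\perp/\ell$. Each Lagrangian contains $(p^n-1)/(p-1)$ lines. Writing $N_n$ for the number of Lagrangians, this gives
\[
N_n=\frac{p^{2n}-1}{p^n-1}\,N_{n-1}=(p^n+1)N_{n-1},
\]
with $N_0=1$. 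This yields the stated product and completes the plan.
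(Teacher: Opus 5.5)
Your proof is correct and complete, and it takes a different route from the paper. The paper does not prove Proposition~\ref{prop:2.2} itself; it quotes it. In Section~\ref{sec:5.2} it recovers only part of it, as an instance of Theorem~\ref{thm:1.1}:
\begin{itemize}
\item every index-$p$ subgroup $N$ gives a Camina triple $(G,N,Z(G))$;
\item the inertia condition (ii) is checked for $\psi\in\irr(N_1)$ lying over a nontrivial character of $Z(G)$;
\item Clifford theory then yields $\gamma\in\irr(N_2)$ with $\gamma^G=\psi^G$ for two distinct maximal subgroups $N_1\neq N_2$.
\end{itemize}
That mechanism explains the coincidence structurally and applies well beyond extraspecial groups, for example to Frobenius groups and to non-monomial groups with primitive inducing characters. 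However, it works with index-$p$ subgroups and characters of degree $p^{n-1}$. Linear characters of abelian subgroups appear only through transitivity ($\psi=\alpha^{N_1}$ with $\alpha\in\irr(A_1)$), and the count $N=\prod_{i=1}^n(p^i+1)$ is never obtained.

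Your argument instead uses the symplectic structure of $G/Z$ directly. Because the faithful $\chi$ over $\lambda$ vanishes off $Z$, one Frobenius reciprocity computation gives $\mu_A^G=\chi$ for every maximal abelian $A$ and every extension $\mu_A$ of $\lambda$. Your double count of Lagrangians then gives exactly $N$ distinct fields. This is more elementary and proves the full statement, including the number $N$, but it is specific to groups in which a faithful character is fully ramified over the center.

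One small point: the sum-of-squares remark does not by itself show that $\chi(g)=0$ for $g\notin Z$. The clean justification is as follows. By nondegeneracy, pick $y$ with $[g,y]\neq 1$. Then $\chi(g)=\chi(g^y)=\lambda([g,y])\chi(g)$ with $\lambda([g,y])\neq 1$, so $\chi(g)=0$. After that, $\sum_{g}|\chi(g)|^2=|G|$ gives $\chi(1)=p^n$ and the uniqueness of $\chi$ over $\lambda$.
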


This proposition, of course, paraphrases the fact that an irreducible character of $\mathrm{ES}(n,p)$ of degree $p^n$ 
is multiply induced from $N$ linear characters of elementary abelian subgroups of index $p^n$.
While this fact is widely considered folklore for experts, most number theorists regarded it as 
a rare or sporadic phenomenon. 
Our Theorem \ref{thm:1.1} shows that it is not only a coincidence, but a structural inevitability from 
the subgroup configuration of Galois group.
This paper also aims to bridge the gap between these perceptions.

\begin{theorem} \label{thm:2.3}
    Let $L/\Q $ be a Galois extension with Galois group $G$.
    Assume that $G$ satisfies the conditions in Theorem \ref{thm:1.1}. In the notation of Theorem \ref{thm:1.1},
    let  $K_1$ and $K_2$ be the subfields corresponding to $H_1$ and $H_2$, respectively.
    Then there exist $\chi \in \irr (G), \, \psi_i \in \irr (H_i) \, (i=1,2)$ satisfying $\psi_i^G = \chi$,
    and hence we have the equality:
    \begin{equation} \label{eq:2.1}
    L (s, \chi, L/\Q ) =L (s, \psi_1, L/K_1) = L (s, \psi_2, L/K_2 ).
    \end{equation}
\end{theorem}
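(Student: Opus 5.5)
Theorem \ref{thm:2.3} is essentially a translation of Theorem \ref{thm:1.1} into the language of Artin $L$-functions, using the induction property of Proposition \ref{prop:2.1}. Here $H_i$ is understood as the maximal normal subgroup $N_i$ of Theorem \ref{thm:1.1}, and $K_i=L^{H_i}$ is its fixed field. The proof has two steps.

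\textbf{Step 1 (character theory).} Since $G$ satisfies the hypotheses of Theorem \ref{thm:1.1}, we have a Camina triple $(G,N_1,M_1)$ and characters $\psi\in\irr(N_1)$, $\alpha\in\irr(H\,|\,\psi)$ satisfying condition \ref{1.1.1} or \ref{1.1.2}. Theorem \ref{thm:1.1} then yields $\gamma\in\irr(N_2)$ with $\gamma^G=\psi^G$. Put $\psi_1=\psi$, $\psi_2=\gamma$ and $\chi=\psi^G$.

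It remains to see that $\chi$ is irreducible, since the theorem asserts $\chi\in\irr(G)$. This is exactly the irreducibility that the Camina-triple rigidity guarantees (Lemma 3.2, referred to in the introduction). Concretely, I would check that $\psi$ lies in $\irr(N_1\,|\,M_1)$, so that $\psi$ vanishes off $M_1$. For $g\notin N_1$, the Camina property gives that $g$ is conjugate to every element of $gM_1$, which forces $\psi^g\neq\psi$. Hence $I_G(\psi)=N_1$, and Clifford's correspondence gives that $\psi^G$ is irreducible.

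If the hypothesis of Theorem \ref{thm:1.1} does not by itself force $\psi\in\irr(N_1\,|\,M_1)$, I would note that this is how the theorem is applied (its converse is phrased with that assumption). Alternatively, one can observe that $\psi^G=\gamma^G$ with both inertia groups proper and $G/N_i$ cyclic of prime order, by solvability and maximality. An induced character from a normal subgroup of prime index is either irreducible or a sum of $p$ extensions, and the equality of the two induced characters excludes the latter case under the stated inertia conditions. I expect this irreducibility check to be the only real obstacle; everything else is formal.

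\textbf{Step 2 (number theory).} Apply Proposition \ref{prop:2.1} to the Galois extension $L/\Q$ with intermediate field $K_i$ and $\gal(L/K_i)=H_i$. This gives $L(s,\psi_i^G,L/\Q)=L(s,\psi_i,L/K_i)$ for $i=1,2$. Since $\psi_1^G=\psi_2^G=\chi$, and the Artin $L$-function depends only on the character of the representation, both right-hand sides equal $L(s,\chi,L/\Q)$. This is \eqref{eq:2.1}.

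Because $K_i/\Q$ is Galois here ($H_i$ normal), no further subtlety arises. The equality holds exactly, not merely up to Euler factors, since Proposition \ref{prop:2.1} is an identity of Artin $L$-functions including the ramified factors.
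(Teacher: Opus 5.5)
Your skeleton matches the paper's: Theorem~\ref{thm:2.3} is read off from Theorem~\ref{thm:1.1} and Proposition~\ref{prop:2.1}. The forward direction of Theorem~\ref{thm:1.1} is run with $\psi\in\irr(N_1\,|\,M_1)$ (its proof starts there), so $\chi=\psi^G$ is irreducible by Lemma~\ref{lem:3.1}, and citing that lemma suffices. Step~2 is fine. However, the irreducibility argument you actually write down is the one step you flag as non-formal, and both versions of it fail.

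\textbf{Your direct argument.} $\psi\in\irr(N_1\,|\,M_1)$ only says $M_1\not\le\ker\psi$. It does not make $\psi$ vanish off $M_1$: for the Camina triple $(D_4,C_4,Z(D_4))$, a faithful linear character of $C_4$ vanishes nowhere. The claim that ``the Camina property forces $\psi^g\ne\psi$'' is also left without a mechanism.

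The vanishing happens on $G$, not on $N_1$. Let $g\notin N_1$ and $\chi\in\irr(G\,|\,M_1)$ be afforded by $\rho$. The Camina condition gives $\chi(gm)=\chi(g)$ for all $m\in M_1$, so $|M_1|\chi(g)=\mathrm{tr}\bigl(\rho(g)\sum_{m\in M_1}\rho(m)\bigr)=0$. This holds because $\sum_m\rho(m)$ projects onto the $G$-stable space of $M_1$-fixed vectors, which is $0$. Every $\chi\in\irr(G\,|\,\psi)$ lies in $\irr(G\,|\,M_1)$, so it vanishes on $G\setminus N_1$. Writing $\chi_{N_1}=e\sum_j\psi^{g_j}$ as in \eqref{eq:3.1}, you get $1=[\chi,\chi]=e^2[G:I_G(\psi)]/[G:N_1]$. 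Since $[G:N_1]$ is prime, $I_G(\psi)=N_1$.

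\textbf{Your fallback.} You claim irreducibility follows from $\psi^G=\gamma^G$ plus the conditions on $I_G(\alpha)$, without the $M_1$ hypothesis. This is false when $\gcd(|G/N_1|,|G/N_2|)\ne 1$. Take $G=S_3\times S_3$ with Camina triple $(G,S_3\times C_3,1\times C_3)$, and $N_2=\{(a,b)\mid \mathrm{sgn}\,a=\mathrm{sgn}\,b\}$. Then $H=C_3\times C_3$; take $\alpha=\lambda\times 1$ with $\lambda\ne 1$. One has $I_G(\alpha)=C_3\times S_3$, which is neither $N_1$ nor $N_2$. The characters $\psi=\alpha^{N_1}=\theta\times 1$ and $\gamma=\alpha^{N_2}$ are irreducible, where $\theta$ is the degree-$2$ character of $S_3$. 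Yet $\psi^G=\gamma^G=\alpha^G=\theta\times 1+\theta\times\mathrm{sgn}$ is reducible.

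So the hypothesis $\psi\in\irr(N_1\,|\,M_1)$ is indispensable. With it, Lemma~\ref{lem:3.1} (equivalently, the computation above) closes the gap.
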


It is readily seen that Theorem \ref{thm:2.3} follows from Theorem \ref{thm:1.1} and Proposition \ref{prop:2.1}.
We will see in Section \ref{sec:5.2} that Proposition \ref{prop:2.2} follows from Theorem \ref{thm:2.3}, but
it is also worth noting that Theorem \ref{thm:2.3} says much more than Proposition \ref{prop:2.2}.
In particular, $\psi_i$ is not necessarily a linear character. Indeed, we can provide
examples of primitive character $\psi_i$ of arbitrarily large degree satisfying \eqref{eq:2.1}
 (see Section \ref{sec:5.3}).

Combining the above theorem with Theorem \ref{thm:1.2}, we obtain the following theorem with weaker assumption.

\begin{theorem} \label{thm:2.4}
    Let $L/\Q $ be a Galois extension with Galois group $H $.
    Assume that $H$ is isoclinic to a group $G$ satisfying the conditions in Theorem \ref{thm:1.1}. 
    Then there exist subfields of $K_1$ and $K_2$  of $L$ and characters $\psi_1$ and $\psi_2$ of 
    $\gal (L/K_1) $  and $\gal (L/K_2) $ satisfying 
    \[ L (s, \psi_1, L/K_1) = L (s, \psi_2, L/K_2 ). \]
\end{theorem}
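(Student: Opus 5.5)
The statement to prove is Theorem \ref{thm:2.4}. The plan is to reduce it to Theorem \ref{thm:2.3} through Theorem \ref{thm:1.2}, with Proposition \ref{prop:2.1} supplying the passage from characters to $L$-functions.

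First, I would apply Theorem \ref{thm:1.1} to $G$, which satisfies its hypotheses. This gives $\psi\in\irr(N_1)$ and $\gamma\in\irr(N_2)$ with $\gamma^G=\psi^G$. This common induced character $\chi$ is irreducible: by the converse part of Theorem \ref{thm:1.1} and the Camina-triple argument (Lemma 3.2, cited in the introduction), $\psi\in\irr(N_1\,|\,M_1)$ induces irreducibly. Hence $G$ possesses a multiply induced irreducible character, induced from the two distinct proper subgroups $N_1\neq N_2$.

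Next, since $H$ is isoclinic to $G$, Theorem \ref{thm:1.2} gives proper subgroups $H_1,H_2$ of $H$ and characters $\psi_i\in\irr(H_i)$ with $\psi_1^H=\psi_2^H=\chi'$, where $\chi'\in\irr(H)$. This transfer step is the one place where real content is hidden. If one wants more detail, one would recall that an isoclinism identifies $G/Z(G)$ with $H/Z(H)$, and $G'$ with $H'$. Subgroups containing the center then correspond, and irreducible characters of $G$ lying over a faithful-on-$G'\cap Z$ piece correspond to those of $H$ compatibly with induction from subgroups containing the center. However, Theorem \ref{thm:1.2} is assumed here, so I would simply invoke it.

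Then I would let $K_i=L^{H_i}$ be the fixed fields, so that $\gal(L/K_i)=H_i$. Applying Proposition \ref{prop:2.1} to the extension $L/\Q$ with group $H$ gives
\[
L(s,\psi_1,L/K_1)=L(s,\psi_1^H,L/\Q)=L(s,\chi',L/\Q)=L(s,\psi_2^H,L/\Q)=L(s,\psi_2,L/K_2).
\]
This is the desired identity.

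The only point needing care is making sure that the multiply induced character obtained in $G$ is irreducible and comes from distinct proper subgroups, so that the hypothesis of Theorem \ref{thm:1.2} is actually met. This is exactly what Theorem \ref{thm:2.3} already records for $G$, so no genuine obstacle remains.
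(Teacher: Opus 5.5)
Your proposal is correct and follows the paper's route: the paper obtains Theorem \ref{thm:2.4} in one line by combining Theorem \ref{thm:2.3} (Theorem \ref{thm:1.1} together with Proposition \ref{prop:2.1}) with the isoclinism transfer of Theorem \ref{thm:1.2}, which is exactly what you do. One small correction: the irreducibility of $\psi^G$ comes from Lemma \ref{lem:3.1} applied to $\psi\in\irr(N_1\,|\,M_1)$, an assumption implicit in the forward direction of Theorem \ref{thm:1.1}, and not from its converse part; this does not affect the argument.
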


The subfields $K_1$ and $K_2$ and the characters $\psi_1$ and $\psi_2$ are explicitly given in terms of 
the isoclinism between $G$ and $H$.
Note that Proposition \ref{prop:2.2} has already been proved in this isoclinism sense.

Now we are ready to go back to group theory.

\section{Multiply induced characters: the proof of Theorem \ref{thm:1.1}} \label{sec:3}
Throughout Sections \ref{sec:3} and \ref{sec:4}, all characters are assumed to be complex irreducible characters.

In this section, we shall prove our main result Theorem \ref{thm:1.1}.
As is expected, we use Clifford theory. For that purpose, we recall that, for $N \triangleleft G $ and $\psi \in \irr (N)$,
we denote by $I_G (\psi) $ the inertia group of $\alpha $ in $G$, and by $e(G/N)$ the integer $ \langle \psi , \chi \rangle $ 
for $\chi \in \irr (G \,|\, \psi)$ (see \cite[(6.2) Theorem]{MR0460423}).

We start with an easy, but crucial lemma.

\begin{lemma} \label{lem:3.1.5} 
    Let $H$ and $N$ be normal subgroups of $G$ such that $H \le N$.
    Let $\psi \in \irr (N)$, $\chi \in \irr ( G , \psi) $,
    and $\alpha \in \irr (H , \psi) $.
    By Clifford theory, there exist integers $e(G/N), e(N/H)$, and 
    $e(G/H)$ called inertia index such that 
  \begin{equation} \label{eq:3.1}
    \chi_N = e(G/N) \sum_{j=1}^m \psi^{g_j}, \ \psi_H = e(N/H) \sum_{j=1}^{\ell} \alpha^{h_j},
 \ \chi_H = e (G/H) \sum_{j=1}^n \alpha^{k_j}.
  \end{equation}
Among these integers, there is a relation:
\begin{equation} \label{eq:3.2}
e (G/H) [G: I_G (\alpha )] = e (G/N) e(N/H) [G: I_G (\psi)] [N : I_N (\alpha )].
\end{equation}
\end{lemma}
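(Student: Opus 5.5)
The relation \eqref{eq:3.2} comes from computing the degree $\chi(1)$ in two ways, using the three Clifford decompositions in \eqref{eq:3.1} together with the transitivity of conjugation. First I record the orbit sizes. By Clifford's theorem applied to $H \triangleleft G$, the $G$-conjugates of $\alpha$ appearing in $\chi_H$ form a single $G$-orbit. So $n = [G:I_G(\alpha)]$. In the same way $m = [G:I_G(\psi)]$ for $N \triangleleft G$, and $\ell = [N:I_N(\alpha)]$ for $H \triangleleft N$.

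Next I write $\chi(1)$ in two ways. Restricting directly from $G$ to $H$ gives
\[
\chi(1) = \chi_H(1) = e(G/H)\,[G:I_G(\alpha)]\,\alpha(1).
\]
Restricting in two steps, $\chi_H = (\chi_N)_H$, gives
\[
\chi(1) = e(G/N)\,[G:I_G(\psi)]\,\psi(1).
\]
Substituting $\psi(1) = e(N/H)\,[N:I_N(\alpha)]\,\alpha(1)$ into the second expression and comparing with the first, we divide by $\alpha(1) \neq 0$ and obtain \eqref{eq:3.2}.

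There are two points to check, and neither is a serious obstacle.

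\emph{The constituent $\alpha$ of $\chi_H$.} The $\alpha$ in the third decomposition of \eqref{eq:3.1} must be a constituent of $\chi_H$. This holds because $\alpha$ lies under $\psi$ and $\psi$ lies under $\chi$, so $\alpha$ is a constituent of $(\chi_N)_H = \chi_H$. Hence the $G$-orbit in the third decomposition really is the orbit of $\alpha$.

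\emph{Well-definedness of the indices.} The integers $e(G/N)$, $e(N/H)$ and $e(G/H)$ do not depend on which conjugate is chosen. This is because all conjugates appear with equal multiplicity, which is part of Clifford's theorem. Since only degrees are compared, the choice of coset representatives $g_j, h_j, k_j$ plays no role.

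If a finer statement were wanted, one could compare the multiplicity of $\alpha$ in $\chi_H$ via $(\chi_N)_H$. That would require counting how many $\psi^{g_j}$ contain $\alpha$, which is more delicate. The degree count above avoids this entirely and suffices for \eqref{eq:3.2}.
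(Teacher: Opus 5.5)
Your proposal is correct and is essentially the paper's proof. Both identify $m,\ell,n$ as the orbit lengths $[G:I_G(\psi)]$, $[N:I_N(\alpha)]$ and $[G:I_G(\alpha)]$, read off $\chi(1)=e(G/N)\,m\,\psi(1)$, $\psi(1)=e(N/H)\,\ell\,\alpha(1)$ and $\chi(1)=e(G/H)\,n\,\alpha(1)$ from \eqref{eq:3.1}, and cancel $\alpha(1)\neq 0$. Your extra check that $\alpha$ is a constituent of $\chi_H$ (so the third orbit really is the $G$-orbit of $\alpha$) is a point the paper leaves implicit.
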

\begin{proof}
Based on \eqref{eq:3.1}, 
we consider the following coset decompositions:
    \[
    G = \bigsqcup_{j=1}^m I_G (\psi ) g_j, \quad N = \bigsqcup_{j=1}^{\ell} I_{N} (\alpha ) h_j,
    \ G= \bigsqcup_{j=1}^n I_G (\alpha ) k_j 
    \]
    with $m= [G : I_G (\psi )], \  \ell = [N : I_N (\alpha )] $, and 
    $n = [G: I_G (\alpha )]$.
It follows also from \eqref{eq:3.1} that
\begin{equation} \label{eq:3.25}
    \chi (1) = e (G/N) m \psi (1), \ \psi (1) = e (N/H) \ell \alpha (1), \ \chi (1) 
= e(G/H) n \alpha (1).
\end{equation}
Since the degrees $\chi (1), \psi (1)$, and $\alpha (1)$ are all non-zero, \eqref{eq:3.2} follows 
immediately.
\end{proof}

We now recall the setting and assumptions of Theorem \ref{thm:1.1}.
We assume that a finite group $G$ satisfies the following properties:
\begin{itemize}
    \item $G$ is a solvable group;
    \item  There exist distinct non-trivial maximal normal subgroups $N_1$
     and $N_2$ of $G$ (hence $G=N_1 N_2$);
    \item  $G$ has a Camina triple $(G,N_1, M_1)$ including $N_1$.
\end{itemize}

Moreover we set $H = N_1 \cap N_2$. 
Under these assumptions, the following lemma holds.

\begin{lemma} \label{lem:3.1}
  If $\psi \in \irr (N_1  \,|\, M_1)$, then $\psi^G \in \irr (G)$.
\end{lemma}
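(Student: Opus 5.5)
Since $\psi$ lies in $\irr (N_1 \,|\, M_1)$, we have $M_1 \not\le \ker \psi$. The plan is to show that $\psi^G$ vanishes off $N_1$ and has norm one, using only the Camina property of $(G,N_1,M_1)$ (solvability and the second subgroup $N_2$ play no role here). The approach is Clifford theory combined with the Camina property. The cleanest route is through the standard formula
\[
\langle \psi^G, \psi^G \rangle = \langle (\psi^G)_{N_1}, \psi \rangle = \sum_{x \in G/N_1} \langle \psi^x, \psi \rangle = [G : I_G(\psi)]^{-1}\cdot\bigl([G:N_1]\bigr)\cdot\ldots,
\]
so in fact it suffices to prove that $I_G(\psi) = N_1$, that is, that no $g \in G \setminus N_1$ fixes $\psi$. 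Then Mackey's formula (with $N_1$ normal) gives $\langle\psi^G,\psi^G\rangle = |\{x\in G/N_1 : \psi^x=\psi\}| = 1$.

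The main step is therefore to show that if $\psi^g=\psi$ for some $g\in G\setminus N_1$, we reach a contradiction. I would argue as follows.

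First, I would show that every irreducible character $\chi$ of $G$ with $M_1 \not\le \ker\chi$ vanishes on $G \setminus N_1$. Fix $g \notin N_1$. By the Camina property, each element of $gM_1$ is conjugate to $g$, so $\chi(gm)=\chi(g)$ for all $m\in M_1$. Hence
\[
\chi(g) = \frac{1}{|M_1|}\sum_{m\in M_1}\chi(gm).
\]
In terms of a representation $\rho$ affording $\chi$, this average equals $\operatorname{tr}\bigl(\rho(g)\, e\bigr)$, where $e=\frac{1}{|M_1|}\sum_{m\in M_1}\rho(m)$ is the projection onto the $M_1$-fixed vectors. Since $M_1 \trianglelefteq G$ and $\chi$ is irreducible, the fixed space is $G$-stable, so it is either $0$ or everything. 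It cannot be everything, because $M_1\not\le\ker\chi$. Thus $e=0$ and $\chi(g)=0$.

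Next, I would apply this to the constituents of $\psi^G$. Every $\chi\in\irr(G\,|\,\psi)$ restricts to $N_1$ as a sum of conjugates of $\psi$. Because $M_1\trianglelefteq G$ and $M_1\not\le\ker\psi$, each conjugate $\psi^x$ also has $M_1\not\le\ker\psi^x$, and therefore $M_1 \not\le \ker\chi$. So every constituent $\chi$ vanishes off $N_1$.

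Finally, I would compute the norm of such a $\chi$ by summing only over $N_1$. Writing $\chi_{N_1} = e\sum_{j=1}^t \psi^{g_j}$, we get
\[
1=\langle\chi,\chi\rangle_G=\frac{1}{[G:N_1]}\langle\chi_{N_1},\chi_{N_1}\rangle_{N_1}=\frac{e^2t}{[G:N_1]}.
\]
The standard Clifford bound $e^2 \le [I_G(\psi):N_1]$ gives $e^2t\le[G:N_1]$, so equality must hold, i.e. $e^2=[I_G(\psi):N_1]$. This alone does not force $I_G(\psi)=N_1$ (the fully ramified case), so instead I would conclude directly. Since $\chi(1) = e\,t\,\psi(1)$ and $\langle\chi,\psi^G\rangle=e$, summing over all constituents gives
\[
\sum_{\chi} e_\chi^2\, t = [I_G(\psi):N_1]\cdot t\cdot(\text{number of }\chi)\ \ldots
\]
A cleaner way to finish is to note that $\psi^G$ itself vanishes off $N_1$ (every constituent does). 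Hence
\[
\langle\psi^G,\psi^G\rangle=\frac{1}{[G:N_1]}\bigl\langle(\psi^G)_{N_1},(\psi^G)_{N_1}\bigr\rangle_{N_1}=\frac{1}{[G:N_1]}\Bigl\|\sum_{x\in G/N_1}\psi^x\Bigr\|^2=\frac{[G:N_1]\,[I_G(\psi):N_1]}{[G:N_1]}=[I_G(\psi):N_1],
\]
while Mackey gives the same value, so this yields no new information. Here lies the main obstacle: we must rule out the fully ramified case. I would try to do this using maximality. Since $N_1$ is a maximal normal subgroup of the solvable group $G$, $G/N_1$ is cyclic of prime order $p$. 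Then $\psi$ extends to $I_G(\psi)$, because a character fixed by a cyclic quotient always extends. Hence $e=1$, and $e^2=[I_G(\psi):N_1]$ forces $I_G(\psi)=N_1$. Therefore $\psi^G$ is irreducible.
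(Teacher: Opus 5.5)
Your argument is correct and is essentially the paper's. The Camina property makes every constituent of $\psi^G$ vanish off $N_1$, which forces the fully ramified relation $e^2=[I_G(\psi):N_1]$; the paper cites Burkett--Lewis for this in the form $\psi^G=[I_G(\psi):N_1]^{1/2}\chi$, whereas you prove it directly by the averaging argument. Primality of $[G:N_1]$ then gives $I_G(\psi)=N_1$: you get it from extendibility over a cyclic quotient, and the paper from $[I_G(\psi):N_1]$ being a (necessarily square) divisor of a prime.

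Do delete the opening claim that solvability plays no role. Your last step needs $[G:N_1]$ to be prime, and the Camina property alone is not enough: a faithful $\psi$ on $Z(G)$ in the Camina triple $(G,Z(G),Z(G))$ of an extraspecial $G$ is fully ramified. Delete also the abandoned displays ending in $\ldots$.
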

\begin{proof}
Recall that $(G,N_1,M_1)$ is a Camina triple. If $\irr (N_1 \,|\,  M_1) = \emptyset$, then
the kernel of every irreducible character of $N_1$ contains $M_1$ and it follows that 
$\displaystyle M_1 \subset \bigcap_{\psi \in \irr (N_1)} \ker \psi =1$ by \cite[Lemma 2.21]{MR0460423}. 
This contradicts the definition of Camina triple (see Definition \ref{def:1.1}) 
and therefore, $\irr (N_1 \,|\,  M_1) \neq \emptyset$.

Let $\psi \in \irr (N_1  \,|\, M)$ and $\chi \in \irr (G \,|\, \psi )$. 
It follows from \cite[Lemma 5.1]{MR4166586} that 
$\chi (x) =0 $ holds for all $x \in G \setminus N_1$. This implies 
\[ \psi^G = [I_G (\psi ) : N_1]^{1/2} \chi  \] 
 as shown in \cite[Lemma 5.2]{MR4166586} (see also \cite[Theorem 3.1]{MR3150725}).
On the other hand, since $G$ is solvable and $N_1$ is maximal, the index $[G:N_1]$ is prime. Hence we conclude that a divisor $[I_G (\psi ) : N_1]$ of $[G:N_1]$ is equal to $1$.
 Hence $\psi^G$ is irreducible and equals $\chi $.
\end{proof}

Now we add the assumption on $I_G (\alpha ) $ in Theorem \ref{thm:1.1} and prove the theorem.

\begin{proof}[Proof of Theorem \ref{thm:1.1}]
Applying Lemma \ref{lem:3.1.5} to the case where $G \ge N_1 \ge H$ 
and $\psi \in \irr (N_1 \,|\, M_1)$, 
we obtain
\begin{equation} \label{eq:3.3}
e (G/H) [G: I_G (\alpha )] = e (G/N_1) e(N_1/H) [G: I_G (\psi)] [N_1 : I_{N_1} (\alpha )].
\end{equation}
Similarly for $G \ge N_2 \ge H$ with $\gamma \in \irr (N_2 \,|\, \alpha )$, we have
\begin{equation} \label{eq:3.4}
e (G/H) [G: I_G (\alpha )] = e (G/N_2) e(N_2/H) [G: I_G (\gamma)] [N_2 : I_{N_2} (\alpha )].
\end{equation}
By Lemma \ref{lem:3.1}, we have $\psi^G \in \irr (G)$ and thus, $I_G (\psi ) =N_1$ and 
$\chi (1) = [G:N_1] \psi (1)$ hold.
These equation implies $e (G/N_1)=1$ since $\chi (1)= e(G/N_1) [G:I_G (\psi)]$ by \eqref{eq:3.25}.
Hence \eqref{eq:3.3} becomes
\begin{equation} \label{eq:3.5}
e (G/H) [G: I_G (\alpha )] =  e(N_1/H) [G: I_{N_1} (\alpha )].
\end{equation}
By combining with \eqref{eq:3.4}, we get 
\[
e (G/H) [G: I_G (\alpha )] =  e(N_1/H) [G: I_{N_1} (\alpha )]=e (G/N_2) e(N_2/H) [G: I_G (\gamma)] [N_2 : I_{N_2} (\alpha )].
\]
Since $N_1$ is a maximal normal subgroup, we have $G' \le N_1$ and therefore $G/N_1$ is a cyclic group 
of prime order. This is also true for $G/N_2$. Thus, $N_1/H $ and $N_2/H$ are also cyclic groups of
prime order.
By \cite[Corollary 6.19]{MR0460423}, we have 
\begin{equation} \label{eq:3.5.5}
    e(G/N_1)=e(G/N_2)= e(N_1/H)=e(N_2/H)=1.
\end{equation}
Hence we obtain 
\begin{equation} \label{eq:3.6}
e (G/H) [G: I_G (\alpha )] =  [G: I_{N_1} (\alpha )]= [G: I_G (\gamma)] [N_2 : I_{N_2} (\alpha )].    
\end{equation}

We first consider the case where $|G/N_1|$ and $|G/N_2|$ are relatively prime and we assume 
$I_G (\alpha ) = H$. This implies that the induction $\alpha^G$ lies in $\irr (G)$.
We also have $I_{N_1} (\alpha ) =I_G (\alpha ) \cap N_1 = H$ and similarly $I_{N_2} (\alpha ) =H$.
It follows from the first equality of \eqref{eq:3.6} that $e(G/H)=1$.
The second equality of \eqref{eq:3.6} yields $ [N_2: H] [G: I_G (\gamma )] = [G: H]$.
Since $[G :N_2]$ is a prime and $[G:I_G (\gamma) ] $ divides $[G: N_2]$, either $I_G (\gamma ) =G$
or $I_G (\gamma ) =N_2$ holds. We shall show that the former is impossible.
Suppose to the contrary that $I_G (\gamma ) =G$ holds. Then $\gamma^G$ is not an irreducible character. 
On the other hand, $I_{N_2} (\alpha ) =H$ yields $\alpha^{N_2} \in \irr (N_2)$ and hence 
$\alpha^{N_2} = \gamma $. Therefore we obtain $\alpha^G =(\alpha^{N_2})^G =\gamma^G$. 
This is a contradiction. We thus conclude that $I_G (\gamma ) = N_2$. It follows from this that
$\gamma^G \in \irr (G)$ and $\gamma^G = \alpha^G = \psi^G$.

Next we consider the case where $\gcd (|G/N_1|, |G/N_2|) = p$ with $p$ prime.
The quotient group $G/H$ is an elementary abelian group of order $p^2$.

If $I_G (\alpha) =H$, then we have $I_{N_1} (\alpha ) = I_{N_2} (\alpha ) = H$ as above
and the equation \eqref{eq:3.6} yields
\[ e (G/H) [G: H ]=  [G: H]= [G: I_G (\gamma)] [N_2 : H].
\]
Hence we have $I_G (\gamma) = N_2$ since $I_G (\gamma) \ge N_2$.
Therefore we conclude $\gamma^G \in \irr (G \,|\, \alpha ) $ as expected.

If $I_G (\alpha) =G$, then a similar argument shows that 
\[
e (G/H) =  [G: N_1]= [G: I_G (\gamma)]. 
\]
This implies that $I_G (\gamma) = N_2$, and therefore $\gamma^G \in \irr (G \,|\, \alpha ) $.

Now we assume $H < I_G (\alpha) < G $. 
Let $N=I_G (\alpha )$. Our assumption implies $N \neq N_1, N_2$, and thus
$I_{N_1} (\alpha ) = I_{N_2} (\alpha ) =H$. Hence the equation \eqref{eq:3.6} yields
\[
e (G/H) [G: N] =  [G: H ]= [G: I_G (\gamma)] [N_2 : H].
\]
Since $[G : I_G (\gamma )] \le [G: N_2]$, we conclude $I_G (\gamma ) =N_2$. 
This yields $\gamma^G \in \irr (G) $. 

We next prove the converse. Thus we assume that $\psi \in \irr (N_1 | M_1)$ is given and that there exists $\gamma \in \irr (N_2)$ such that
$\psi^G= \gamma^G$. Let $\alpha \in \irr (H \,|\, \psi ) \cap \irr (H \,|\, \gamma )$.
As in the first half of the proof, the identities \eqref{eq:3.3} and \eqref{eq:3.4} hold. 
Our assumption yields $I_G (\psi) = N_1$ and $I_G (\gamma )=N_2$.  The equalities \eqref{eq:3.5.5} hold for the same reason.
Therefore, we obtain 
\begin{equation} \label{eq:3.9}
e(G/H) [G: I_G (\alpha )]  = [G : N_1][N_1: I_{N_{1}} (\alpha )] = [G : N_2] [N_2: I_{N_2} (\alpha )]
\end{equation}
instead of \eqref{eq:3.6}.

Suppose first that $\gcd ( |G/N_1| , |G/N_2|)=1$. 
We rewrite \eqref{eq:3.9} as
\[
e(G/H) [G: I_G (\alpha )]  = [G : I_G (\alpha ) ][ I_G (\alpha ): I_{N_{1}} (\alpha )] = [G : I_G (\alpha)] [I_G (\alpha ): I_{N_2} (\alpha )]
\]
and obtain 
\begin{equation} \label{eq:3.10}
e(G/H)   = [ I_G (\alpha ): I_{N_{1}} (\alpha )] =  [I_G (\alpha ): I_{N_2} (\alpha )].
\end{equation}
For $i=1,2$, we have an equality $ [ I_G (\alpha ): I_{N_{i}} (\alpha )] = [I_G (\alpha ) N_i : N_i] $ and this index is a divisor 
of $[G:N_i]$. Hence our assumption implies $ \gcd ([ I_G (\alpha ): I_{N_{1}} (\alpha )] , [ I_G (\alpha ): I_{N_{2}} (\alpha )] )=1$.
This yields $e(G/H)=1$ and $I_G (\alpha) = I_{N_1} (\alpha ) = I_{N_2} (\alpha)$.
Since $I_{N_1} (\alpha ) \cap I_{N_2} (\alpha) = I_{H} (\alpha)$, we have $I_G (\alpha) = I_H (\alpha ) =H$. This is the condition \ref{1.1.1}.

Next we consider the case where $\gcd ( |G/N_1| , |G/N_2|)$ is a prime number $p$.
We have to prove that $I_G (\alpha ) \neq N_i \ (i=1,2)$.

A similar argument as the above case shows \eqref{eq:3.10}.
Since $[I_G (\alpha): I_{N_1} (\alpha) ]$ is a divisor of $p$, we have either $e(G/H)=1$ or $e(G/H)=p$.

If $e(G/H)=1$, we have $I_G (\alpha ) = I_{N_1} (\alpha) = I_{N_2} (\alpha ) =H$ similarly as above.

Next assume that $e(G/H)=p$.
If $I_{N_1} (\alpha)= I_{N_2} (\alpha) $ holds, then it follows that $I_{N_1} (\alpha ) = 
I_{N_1} (\alpha) \cap  I_{N_2} (\alpha) = H $ and thus, $I_{N_2} (\alpha ) = H$ also holds.
If $I_G (\alpha ) = N_1$, then $N_1= I_G (\alpha ) \cap N_1 = I_{N_1} (\alpha ) = H$. Hence
$N_1 \cap N_2 = H = N_1$ yields $N_2 \ge N_1$. By the maximality of $N_1$ and $N_2$, we obtain $N_1 = N_2$
and this is a contradiction. If $I_G (\alpha ) =N_2$, a similar argument leads to a contradiction.

Assume now that $I_{N_1} (\alpha) \ne  I_{N_2} (\alpha)$.
Suppose to the contrary that $I_{G} (\alpha ) =N_1$. Then we have $I_{N_1} (\alpha ) = N_1$ and $I_{N_2} (\alpha ) = H$ and hence,
$I_{N_1} (\alpha )> I_{N_2} (\alpha ) $. This contradicts \eqref{eq:3.10}.
The case where $I_{G} (\alpha ) =N_2$ can be proved similarly.

This completes the proof of Theorem \ref{thm:1.1}.
\end{proof}

\section{Multiply induced characters and isoclinisms} \label{sec:4}
In this section, we prove Theorem \ref{thm:1.2}.
First we recall the definition of isoclinism from \cite{MR0003389}.

\begin{definition} \label{def:4.1}
  Let $G$ and $H$ be finite groups. 
  The groups $G$ and $H$ are \textit{isoclinic} if there exist isomorphisms
  $\eta : G /Z(G) \xrightarrow{\sim} H /Z (H) $
  and $ \xi : {G}' \xrightarrow{\sim} {H}'$ such
  that the following diagram is commutative:
\[
   \xymatrix{
  G/ Z(G) \times G/ Z(G) \ar[d]_{\eta \times \eta}
  \ar[r]^-{k_{G}} & {G}' \ar[d]^{\xi} \\
    H/ Z(H) \times H/ Z(H) \ar[r]^-{k_{H}} & {H}' ,
   }
\]
 where $k_{G} $ and $k_{H}$ are the commutator maps.
 If $G$ and $H$ are isoclinic, then we write $G \sim H$
 and we call the pair $(\eta, \xi)$ an
  \textit{isoclinism}.
\end{definition}

Isoclinism is an equivalence relation on finite groups that is coarser than isomorphism.
In every isoclinism class of finite groups, there is a representative called a \emph{stem group}.
A stem group satisfies, by definition, $Z(G) \le  G'$ and is of minimal order in the class.

We now assume that two finite groups $G$ and $H$ are isoclinic 
by the isoclinism $(\eta, \xi)$.

In \cite{MR0148734}, Weichsel constructed a group $C$ such that $C \sim G, \, C \sim H$,
and that both $G$ and $H$ are quotients of $C$.
The group $C$ is defined as a fiber product:
\begin{equation}  \label{eq:4.1}
C  = G \Yup H  =\{ (g,h) \in  G \times H \mid \eta (gG) = hH  \}.
\end{equation}
If we set
\[ Z_G = \{ (z_1, 1) \in C \mid z_1 \in Z (G) \} \text{ and }
Z_H = \{ (1, z_2) \in C \mid z_2 \in Z (H) \},
\]
then we can show that $C/Z_{H} \cong G$ and $C/Z_{G} \cong  H$.
From the fact $C' \cap Z_{H} = C' \cap Z_{G}=1$, it follows that 
the canonical surjections $C \rightarrow G$ and $C \rightarrow H$
induce isoclinisms $C \sim G$ and $C \sim H$.

The group $C$ mediates to describe $\irr (H)$ in terms of $\irr (G)$ under the assumption that
$G$ is a stem group.

\begin{proposition}[\protect{\cite[III.(5.6) and (5.7)]{MR681287}}] \label{prop:4.2}
Let the notation and the assumption being as above. 
In particular, let $G$ be a stem group.
For each irreducible representation $\rho $ of $G$, 
there exists $\mu^* \in \irr (C)_1$ and 
$ \tau_i \in \irr (H)_1$ such that all irreducible representations
$\tilde{\rho} $ of $H$ are of the form
\begin{equation} \label{eq:4.2}
\tilde{\rho} (h) = \mu^* (g , h) \tau_i (h) \rho (g). 
 %= \hat{\rho} (g,h) \tau_i (h).
\end{equation}
for any $g \in G$ and $h \in H$ satisfying $\eta (g Z(G)) = h Z(H)$.

Moreover, for $\rho_1, \rho_2 \in \irr (G)$, the following statements are equivalent: 
\begin{enumerate}
\item $\tilde{\rho_1} \tau_i =\tilde{\rho_2} \tau_j$ holds;
\item $\rho_1= \rho_2$ and $\tau_i \equiv \tau_j \pmod{\irr (H/Z(H)H')}$ hold.
\end{enumerate}
\end{proposition}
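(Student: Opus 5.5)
The statement just above is Proposition 4.2, quoted from Berkovich–Zhmud'; it is not proved in the paper. Below is how I would reconstruct it.

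Work in the Weichsel group $C = G \Yup H$ with its central subgroups $Z_G$ and $Z_H$. The approach is to pass from $G$ to $C$ and then down to $H$, as follows.

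\textbf{Step 1: inflate to $C$ and twist.} Inflate $\rho$ along $C \to C/Z_H \cong G$ to get $\hat\rho(g,h) = \rho(g)$. This is irreducible on $C$ with $Z_H \le \ker\hat\rho$. The representations of $H$ are exactly the irreducible representations of $C$ that are trivial on $Z_G$. Since $Z_G$ is central in $C$, $\hat\rho$ restricts on $Z_G$ to a scalar $\omega_\rho \in \irr(Z_G)$.

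Because $C' \cap Z_G = 1$, the linear character $\omega_\rho^{-1}$ of $Z_G$ can be pulled back to $Z_G C'/C' \cong Z_G$. It then extends to a linear character $\mu^*$ of the abelian group $C/C'$, since characters of subgroups of abelian groups extend. Then $\mu^*\hat\rho$ is irreducible and trivial on $Z_G$. Hence it descends along $C \to C/Z_G \cong H$ to some $\tilde\rho_0 \in \irr(H)$ with $\tilde\rho_0(h) = \mu^*(g,h)\rho(g)$ for every pair $(g,h) \in C$.

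\textbf{Step 2: exhaustiveness.} Run the same construction in reverse, from $H$ back to $G$. Every $\sigma \in \irr(H)$ then arises from some $\rho \in \irr(G)$ up to multiplication by a linear character of $C$ trivial on $Z_G$, that is, by some $\tau_i \in \irr(H)_1$. So every irreducible representation of $H$ has the form \eqref{eq:4.2}.

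The stem hypothesis $Z(G) \le G'$ enters here. The restriction of $\mu^*$ to $Z_H$ can be chosen freely, and the ambiguity in the choice of $\mu^*$ is absorbed entirely by the $\tau_i$. Moreover, $\rho \mapsto \rho|_{Z(G)}$ carries no extra twisting freedom on $G$'s side, because every linear character of $G$ is trivial on $Z(G) \le G'$.

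\textbf{Step 3: the equivalence.} I read statement (1) as $\mu^*\hat\rho_1\tau_i = \mu^*\hat\rho_2\tau_j$ on $C$. Restrict both sides to the pairs $(g,h) \in C$ with $h \in Z(H)H'$.

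For (1)$\Rightarrow$(2), compare the two sides there. Using that $\tau_i$ and $\tau_j$ are linear and that isoclinism identifies the commutator structures, one should get $\rho_1 = \rho_2$ as characters of $G$, by evaluating on $G$ through $\eta$. It then follows that $\tau_i\tau_j^{-1}$ is trivial on $Z(H)H'$.

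For (2)$\Rightarrow$(1), the argument is direct. If $\tau_i\tau_j^{-1}$ is trivial on $Z(H)H'$, it is the pullback of a character of $H/Z(H)H' \cong G/Z(G)G'$. Pulling it back to $C$ and absorbing it into $\rho$'s side yields $\rho\lambda$ for a linear character $\lambda$ of $G$, which gives (1).

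\textbf{Main obstacle.} The hardest step is the bookkeeping in Step 3: showing that the twist by $\tau$ modulo $\irr(H/Z(H)H')$ accounts exactly for the non-uniqueness of $\mu^*$ and $\rho$. The stem condition is used precisely to rule out twisting $\rho$ by linear characters that are nontrivial on $Z(G)$. I would first verify this by a degree count, comparing $\sum \tilde\rho(1)^2 = |H|$ with the corresponding sum over $G$ weighted by the index $[H : Z(H)H']$.
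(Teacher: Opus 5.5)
\textbf{Verdict: Steps 1 and 2 are fine; Step 3 has a genuine gap.}

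Steps 1 and 2 are correct. Since $C'\cap Z_G=1$, the inverse of the central character of $\hat\rho$ on $Z_G$ extends to a linear $\mu^*$ of $C$. The reverse construction then shows that every $\sigma\in\irr(H)$ equals $\tilde\rho\nu$ for some $\rho\in\irr(G)$ and some $\nu\in\irr(H)_1$. Note that this part uses neither stemness nor any transversal.

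\textbf{The literal equivalence is false.} If $\tau_i,\tau_j$ are arbitrary elements of $\irr(H)_1$, as your Step 3 treats them, the equivalence fails in both directions. For linear $\rho$, stemness gives $\rho_{Z(G)}=1$, so $\mu^*=1$ is allowed, and then $\tilde\rho\in\irr(H/Z(H)H')$. Hence for $1\ne\kappa\in\irr(H/Z(H)H')$ (e.g.\ $G=H=Q_8$) we have $\kappa\equiv 1_H$ but $\tilde{1}_G\kappa\ne\tilde{1}_G$. Conversely, for $1_G\ne\lambda\in\irr(G)_1$ we have $\tilde{1}_G\cdot\tilde\lambda=\tilde\lambda\cdot 1_H$ although $1_G\ne\lambda$.

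\textbf{The correct reading and the coherence requirement.} The statement must be read with the $\tau_i$ forming a fixed transversal $T$ of $\irr(H/Z(H)H')$ in $\irr(H)_1$, as the paper remarks afterwards, and with $\mu^*$ depending only on $\rho_{Z(G)}$. Its content is then that $(\rho,\tau)\mapsto\tilde\rho\tau$ is a bijection $\irr(G)\times T\to\irr(H)$.
\begin{itemize}
\item Coherence is not automatic, contrary to your remark that the freedom in $\mu^*$ is absorbed by the $\tau_i$. That freedom is an arbitrary element of $\irr(H)_1$, which a transversal cannot absorb. For instance, taking $\mu^*=\hat\lambda^{-1}$ for a linear $\lambda\ne 1_G$ gives $\tilde\lambda=\tilde{1}_G$.
\item Nor is the restriction of $\mu^*$ to $Z_H$ free. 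For $z\in Z(G)\le G'$ we have $(z,1)=(z,\xi(z))(1,\xi(z)^{-1})$ with $(z,\xi(z))\in C'$, so $\mu^*$ is forced on $\{1\}\times\xi(Z(G))$.
\end{itemize}

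\textbf{Why your two directions do not prove the bijection.}
\begin{itemize}
\item For (1)$\Rightarrow$(2): the pairs $(g,h)\in C$ with $h\in Z(H)H'$ are exactly those with $g\in Z(G)G'=G'$. On $G'$ the characters $\rho$ and $\rho\lambda$, for $\lambda\in\irr(G/G')$, agree, so no comparison there can yield $\rho_1=\rho_2$.
\item For (2)$\Rightarrow$(1): absorbing $\kappa=\tau_i\tau_j^{-1}$ into $\rho$ gives $\tilde\rho\tau_i=\widetilde{\rho\kappa_G}\tau_j$, which is not (1).
\end{itemize}

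\textbf{How to repair it.} That absorption is exactly what proves surjectivity.
\begin{itemize}
\item Viewed on $C$, $\kappa\in\irr(H/Z(H)H')$ is trivial on $Z_H$, so it is the inflation of some $\kappa_G\in\irr(G/G')$.
\item Since $Z(G)\le G'$, $\rho\kappa_G$ has the same central character as $\rho$. The coherent choice therefore gives $\mu^*_{\rho\kappa_G}=\mu^*_\rho$ and $\tilde\rho\kappa=\widetilde{\rho\kappa_G}$.
\item Writing the $\nu$ of Step 2 as $\tau\kappa$ with $\tau\in T$ shows that $\irr(G)\times T\to\irr(H)$ is onto.
\end{itemize}
Injectivity then follows from your degree-count idea, but with the right weight. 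Use $|T|=[Z(H)H':H']=|Z(H)|/|Z(H)\cap H'|=|Z(H)|/|Z(G)|$, not $[H:Z(H)H']$; this uses $\xi(Z(G)\cap G')=Z(H)\cap H'$ together with stemness. Then $\sum_{(\rho,\tau)}\rho(1)^2=|T|\,|G|=|H|=\sum_{\sigma\in\irr(H)}\sigma(1)^2$, so a surjective degree-preserving map must be injective.
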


It is shown that we can take $\tau_i \ (i=1,\ldots, b)$ as a transversal of $\irr (H/Z(H)H')_1$ in $\irr (H)_1$
with $b=[Z (H): Z (G)]$.

Within this construction, we may consider \eqref{eq:4.2} as an equality of characters of $C$. 
It is easy to see $\eqref{eq:4.2}$ does not depend on the choice of $g$ and $h$.

The following proposition contains a key observation for the proof of Theorem \ref{thm:1.2}.

\begin{proposition} \label{prop:4.3}
Assume that $G$ and $H$ are isoclinic by isoclinism $(\eta, \xi)$
and that $G$ is a stem group.
Let $\rho$ be an irreducible representation of $G$ and $\tilde{\rho }$ a corresponding 
irreducible representation of $H$ as in \eqref{eq:4.2} and $\chi$ 
and $\tilde{\chi}$ the corresponding characters.

If two groups $G_1$ and $H_1$ satisfy $Z(G) \le G_1 \le G$ and $Z(H) \le H_1 \le H$
and $G_1/ Z(G) \cong H_1/ Z(H) $ by $\eta$, then the following statements are equivalent: 
\begin{enumerate}
    \item $\chi$ is induced from an irreducible character of $G_1$;
    \item $\tilde{\chi} $ is induced from an irreducible  character of $H_1$.
\end{enumerate}
\end{proposition}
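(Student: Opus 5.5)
We will transport the inducing data through the fiber product $C = G \Yup H$ of \eqref{eq:4.1}, where both $\chi$ and $\tilde{\chi}$ live as characters of $C$. Let $\pi_G : C \to G$ and $\pi_H : C \to H$ be the canonical surjections, with kernels $Z_H$ and $Z_G$. Set
\[
C_1 = \{(g,h) \in C \mid g \in G_1\}.
\]
Since $\eta(G_1/Z(G)) = H_1/Z(H)$, this is also $\{(g,h)\in C \mid h \in H_1\}$, so $\pi_G(C_1)=G_1$ and $\pi_H(C_1)=H_1$. Moreover $C_1 \supseteq Z_G Z_H$, and $[C:C_1]=[G:G_1]=[H:H_1]$. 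By symmetry of the roles of $G$ and $H$ (up to the formula \eqref{eq:4.2}), it suffices to prove (1)$\Rightarrow$(2).

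\emph{Step 1: pull back to $C$.} Suppose $\chi=\beta^G$ with $\beta\in\irr(G_1)$. Inflate along $\pi_G$: $\chi\circ\pi_G = (\beta\circ\pi_G|_{C_1})^C$. This holds because $\ker\pi_G = Z_H \le C_1$, and induction commutes with inflation from a quotient by a normal subgroup contained in the inducing subgroup.

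\emph{Step 2: twist.} By \eqref{eq:4.2}, regarded as an identity of characters of $C$, we have
\[
\tilde{\chi}\circ\pi_H = \mu^*\,(\tau_i\circ\pi_H)\,(\chi\circ\pi_G),
\]
where $\lambda := \mu^*(\tau_i\circ\pi_H)$ is a linear character of $C$. Since $\lambda\cdot\theta^C = (\lambda_{C_1}\theta)^C$ for linear $\lambda$, we obtain
\[
\tilde{\chi}\circ\pi_H = (\lambda_{C_1}\cdot \beta\circ\pi_G)^C,
\]
and $\theta := \lambda_{C_1}\,(\beta\circ\pi_G)$ is irreducible on $C_1$.

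\emph{Step 3: descend to $H$.} We must show that $\theta$ is trivial on $Z_G=\ker\pi_H$, so that $\theta$ is inflated from some $\tilde{\beta}\in\irr(H_1)$. Then $\tilde{\chi}=\tilde{\beta}^H$, and $\tilde{\beta}$ is irreducible because $\tilde{\chi}$ is.

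We expect Step 3 to be the main obstacle. The approach is to use that the left side $\tilde{\chi}\circ\pi_H$ is trivial on $Z_G$, which is central in $C$. Restrict the induced character to the central subgroup $Z_G\le C_1$. Centrality gives $(\theta^C)_{Z_G} = [C:C_1]\,\theta_{Z_G}$, and since $\theta$ is irreducible, $\theta_{Z_G}=\theta(1)\,\omega$ for a linear $\omega$. Triviality of $\tilde{\chi}\circ\pi_H$ on $Z_G$ then forces $\omega=1$, that is, $Z_G\le\ker\theta$.

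For the converse direction, where the formula \eqref{eq:4.2} is not symmetric, we solve it for $\chi$: $\chi\circ\pi_G = \overline{\lambda}\,(\tilde{\chi}\circ\pi_H)$. We then run Steps 1–3 with $G$ and $H$ interchanged, using that $Z_H$ is central in $C$ and lies in $\ker(\chi\circ\pi_G)$.
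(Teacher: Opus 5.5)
Your proposal is correct and takes the same route as the paper: view $\chi$ and $\tilde{\chi}$ as characters of the fiber product $C$, twist the induced character by the linear character $\mu^*\tau_i$ via $\lambda\,\theta^C=(\lambda_{C_1}\theta)^C$, and descend to $H_1$. Your Step 3, which shows $Z_G\le\ker\theta$ using the centrality of $Z_G$, and your inversion by $\overline{\lambda}$ for the converse give explicit arguments for the descent and for the ``similarly'' that the paper only sketches.
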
 
\begin{proof}
The equation \eqref{eq:4.2} yields that
\[
\tilde{\chi} (h)= \mu^* (g , h) \tau_i (h) \chi (g)  \]
for $(g,h) \in C$. 
We may regard all characters appearing in this equality as characters of $C$.

Now suppose that $\chi  = \xi^C $ with $\xi \in \irr ( G_1 )$,  then we obtain 
\[
\tilde{\chi} = ((\mu^*)_{G_1} (\tau_i)_{G_1} \xi )^C
\]
by \cite[(38.5) Theorem]{MR0144979}.
We note that, for $(g,h) \in C \cap G_1$, 
\[
\tilde{\chi}  (g,h) = \tilde{\chi} (\eta (g)).
\]
This shows that $\tilde{\chi} $ is induced from a character in $\irr (\eta (G_1))$.

The reverse implication is proved similarly and we omit the proof.
\end{proof}

We note that if a character $\chi \in \irr (G)$ is induced from an irreducible 
character of a subgroup $H$, then $H \supset Z(G)$ always holds, since the induced character vanishes
outside $H$.

The following corollary, which will be used in Section \ref{sec:5.3},
follows immediately from the above proposition.

\begin{corollary} \label{cor:4.4}
    Under the assumptions and notation in Proposition \ref{prop:4.3}, 
    $\chi $ is a primitive character if and only if so is $\tilde{\chi}$.
\end{corollary}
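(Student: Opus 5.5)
The corollary follows from Proposition \ref{prop:4.3} once we know that every subgroup relevant to inducing $\chi$ contains $Z(G)$, and that $\eta$ sets up an index-preserving bijection between such subgroups of $G$ and those of $H$ containing $Z(H)$.

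Recall that $\chi$ is \emph{imprimitive} if $\chi=\xi^G$ for some proper subgroup $G_1<G$ and some $\xi\in\irr(G_1)$. I will prove the contrapositive in both directions: $\chi$ is imprimitive if and only if $\tilde{\chi}$ is.

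First, suppose $\chi=\xi^G$ with $G_1<G$ proper. An induced character vanishes off the conjugates of $G_1$. For $z\in Z(G)$ we have $|\chi(z)|=\chi(1)\neq 0$, so $z$ lies in some conjugate of $G_1$. Since $z$ is central, this forces $z\in G_1$, and hence $Z(G)\le G_1$; this is the remark made just after Proposition \ref{prop:4.3}. By the correspondence theorem, $\eta$ carries $G_1/Z(G)$ onto a subgroup of $H/Z(H)$, which has the form $H_1/Z(H)$ with $Z(H)\le H_1\le H$. The isomorphism $\eta$ preserves indices, so
\[
[H:H_1]=[H/Z(H):H_1/Z(H)]=[G/Z(G):G_1/Z(G)]=[G:G_1]>1,
\]
and therefore $H_1$ is proper in $H$. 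Since $G_1/Z(G)\cong H_1/Z(H)$ via $\eta$, Proposition \ref{prop:4.3} applies and shows that $\tilde{\chi}$ is induced from an irreducible character of $H_1$. Thus $\tilde{\chi}$ is imprimitive.

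Conversely, suppose $\tilde{\chi}=\zeta^H$ with $H_1<H$ proper. The same centrality argument, now in $H$, gives $Z(H)\le H_1$. Setting $G_1/Z(G)=\eta^{-1}(H_1/Z(H))$ produces a proper subgroup $G_1$ of $G$ containing $Z(G)$. The reverse implication of Proposition \ref{prop:4.3} then shows that $\chi$ is imprimitive.

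The one point needing care is whether Proposition \ref{prop:4.3} requires the character we induce from to be irreducible. This is harmless. If $\chi=\xi^G$ is irreducible, then $\xi$ must be irreducible, since any decomposition of $\xi$ into constituents would induce to a decomposition of $\chi$. So the hypothesis of the proposition is satisfied, and combining the two directions gives the corollary.
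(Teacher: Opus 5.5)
Your proposal is correct and follows the paper's route. The paper deduces the corollary directly from Proposition \ref{prop:4.3}, together with the remark that a subgroup from which an irreducible character is induced must contain the center; you make explicit the details it leaves implicit, namely that $\eta$ matches proper subgroups containing $Z(G)$ with proper subgroups containing $Z(H)$ (by index preservation), and that the inducing character is automatically irreducible. Your centrality argument, which uses the conjugates of $G_1$, is in fact slightly more careful than the paper's remark, since that remark says the induced character ``vanishes outside'' the subgroup, which is literally true only for normal subgroups.
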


To prove Theorem \ref{thm:1.2}, we have to remove the condition that $G$ is a stem group.
Indeed, we can prove the following stronger theorem than Theorem \ref{thm:1.2}.

\begin{theorem}
    Let $G$ and $H$ be isoclinic groups. The group $G$ has a multiply induced character if and only
    if so does $H$.
\end{theorem}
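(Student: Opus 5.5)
Since isoclinism is an equivalence relation and every isoclinism class contains a stem group, I will reduce to Proposition \ref{prop:4.3}. Let $S$ be a stem group in the common class, with isoclinisms $G\sim S$ and $H\sim S$. By symmetry it suffices to prove: if $S$ is a stem group isoclinic to a group $K$, then $S$ has a multiply induced irreducible character if and only if $K$ does. Applying this with $K=G$ and with $K=H$ gives the theorem.

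Fix an isoclinism $(\eta,\xi)$ from $S$ to $K$. By Proposition \ref{prop:4.2}, every $\tilde\chi\in\irr(K)$ has the form $\tilde\chi(h)=\mu^*(g,h)\tau_i(h)\chi(g)$ for some $\chi\in\irr(S)$. Conversely, each $\chi\in\irr(S)$ yields at least one such $\tilde\chi\in\irr(K)$.

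Suppose $\chi$ is induced from irreducible characters of proper subgroups $S_1\ne S_2$ of $S$. By the remark after Proposition \ref{prop:4.3}, both $S_1$ and $S_2$ contain $Z(S)$. Let $K_j$ be the full preimage in $K$ of $\eta(S_j/Z(S))$. Then $Z(K)\le K_j$ and $K_j/Z(K)\cong S_j/Z(S)$ via $\eta$. The correspondence $X/Z(S)\mapsto \eta(X/Z(S))$ is an index-preserving bijection between subgroups of $S$ containing $Z(S)$ and subgroups of $K$ containing $Z(K)$. Hence $K_j$ is proper, $K_1\ne K_2$, and $[K:K_j]=[S:S_j]$. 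Proposition \ref{prop:4.3}, applied to each pair $(S_j,K_j)$, shows that $\tilde\chi$ is induced from an irreducible character of $K_1$ and also from one of $K_2$. So $\tilde\chi$ is multiply induced.

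For the converse, let $\tilde\chi\in\irr(K)$ be induced from irreducible characters of proper subgroups $K_1\ne K_2$. Again $Z(K)\le K_j$, and I take $S_j$ to be the corresponding subgroups of $S$. Choose $\chi\in\irr(S)$ with $\tilde\chi$ attached to $\chi$ as in \eqref{eq:4.2}. The reverse implication in Proposition \ref{prop:4.3} then makes $\chi$ induced from both $S_1$ and $S_2$.

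The main obstacle is that Proposition \ref{prop:4.3} is stated only for a stem group on one side, and the relation between $\chi$ and $\tilde\chi$ passes through the fibre product $C$ of \eqref{eq:4.1}. I need to check that ``induced from $S_1$'' and ``induced from $K_1$'' both transfer correctly through $C$, using the preimage of $S_1$ in $C$, and that properness and distinctness of the subgroups are preserved there. Distinctness holds because the preimages in $C$ of $S_1$ and of $K_1$ coincide: both equal $\{(g,h)\in C: g\in S_1\}$. Properness holds because indices are preserved under the quotient maps $C\to S$ and $C\to K$, whose kernels $Z_K$ and $Z_S$ are central and contained in the preimages.

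If chaining through a stem group raises any issue, the fallback is to work directly with Weichsel's $C$ for $G$ and $H$. Since $G\cong C/Z_H$ and $H\cong C/Z_G$, a character of $G$ inflates to $C$, and inducing from a subgroup containing $Z_H$ commutes with inflation. One would then compare with $H$ via the twisting by linear characters in \eqref{eq:4.2}.
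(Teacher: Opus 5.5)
Your proposal is correct and follows essentially the same route as the paper. Like the paper, you pass through a stem group in the common isoclinism class and apply Proposition~\ref{prop:4.3} in both directions to the isoclinisms with $G$ and with $H$, transporting the inducing subgroups (which contain the centre) via $\eta$. You also check that this correspondence preserves indices, properness and distinctness, which the paper leaves implicit; the fallback through Weichsel's group $C$ is not needed.
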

 \begin{proof}
   We keep the notation in Proposition \ref{prop:4.3}.

   Let $G_0$ be a stem group in the isoclinism class of $G$ (hence of $H$).
   We apply Proposition \ref{prop:4.3} to both isoclinisms $G \sim G_0$ by $(\xi, \eta )$ 
   and $H \sim G_0$ by $(\xi',\eta' )$.
   If $\tilde{\chi }\in \irr (G) $ is induced both from characters of subgroups $U_1$ and $U_2$ 
   containing $Z(G)$, then, by Proposition \ref{prop:4.3}, 
   the corresponding character $\tilde{\chi} \in \irr (G_0)$  
   is induced from those of $\eta (U_1)$ and $\eta (U_2)$.
   If $\hat{\chi} \in \irr (H)$ is a character corresponding to $\chi$ by \eqref{eq:4.2}, then
   it is induced simultaneously from $\eta'^{-1} \eta (U_1)$ and  $\eta'^{-1} \eta (U_2)$.
   
   The converse assertion is also valid, since the isoclinism is an equivalence relation.
 \end{proof}

In the end of this section, we show that there is a one-to-one correspondence of Camina triples 
in isoclinic groups. This result, in turn, gives a variant of Theorem \ref{thm:1.2}.

\begin{theorem}  \label{prop:4.5}
Let $G$ and $H$ are isoclinic groups by the isoclinism $(\eta, \xi )$.
Suppose that $(G,N_1,M_2)$ is a Camina triple of $G$. If we define subgroups $N_2$ and $M_2$ of $H$ by
$\eta (N_1/Z (G)) = N_2 /Z (H)$ and $ \xi (M_1) = M_2$, then $(H,N_2, M_2)$ is a Camina triple 
of $H$.
\end{theorem}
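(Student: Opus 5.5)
The plan is to rephrase the Camina condition entirely in terms of the commutator map and then transport it through the commutative square of Definition~\ref{def:4.1}. (I read the hypothesis as ``$(G,N_1,M_1)$ is a Camina triple'', matching the definition of $M_2=\xi(M_1)$.)

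For $g\in G$, the conjugates of $g$ are the elements $g^x=g[g,x]$. So ``$g$ is conjugate to every element of $gM_1$'' is equivalent to
\[
M_1\subseteq [g,G]:=\{[g,x]\mid x\in G\}.
\]
Two preliminary facts follow from this.
\begin{enumerate}
\item $M_1\le G'$. Pick any $g\notin N_1$, which exists because $N_1$ is proper; then $M_1\subseteq[g,G]\subseteq G'$.
\item $Z(G)\le N_1$. If $z\in Z(G)\setminus N_1$, then $[z,G]=\{1\}$, which forces $M_1=1$ and contradicts nontriviality.
\end{enumerate}
Consequently $N_1/Z(G)$ makes sense, and $N_2$ is a well-defined subgroup of $H$ with $Z(H)\le N_2$. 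Since $\eta$ is an isomorphism, $N_2$ is normal in $H$ and $N_2\neq H$. Also $M_2=\xi(M_1)\neq 1$.

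Next I would prove the Camina property for $(H,N_2,M_2)$. Let $h\in H\setminus N_2$ and choose $g\in G$ with $\eta(gZ(G))=hZ(H)$. Then $g\notin N_1$, since $N_1$ is a union of $Z(G)$-cosets. As $x$ runs over $G$, the coset $\eta(xZ(G))=yZ(H)$ runs over all of $H/Z(H)$, and commutativity of the diagram gives $\xi([g,x])=[h,y]$. Hence
\[
\xi([g,G])=[h,H],
\]
because commutators depend only on cosets modulo the centre. Therefore $M_2=\xi(M_1)\subseteq\xi([g,G])=[h,H]$, which is exactly the Camina condition at $h$.

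It remains to show that $M_2$ is normal in $H$ and that $M_2\le N_2$. Both rest on the standard compatibility property of an isoclinism: for $a\in G'$,
\[
\eta(aZ(G))=\xi(a)Z(H).
\]
I would verify this by writing $a$ as a product of commutators $[g_i,x_i]$ and using that $\xi$ is a homomorphism agreeing with $k_H\circ(\eta\times\eta)$ on commutators. Given this, $m\in M_1\le N_1$ yields $\xi(m)\in N_2$, so $M_2\le N_2$. For normality, take $a\in M_1$ and $y\in H$, and choose $x$ with $\eta(xZ(G))=yZ(H)$. Then
\[
\xi(a)^y=\xi(a)[\xi(a),y]=\xi(a)\,\xi([a,x])=\xi(a^x)\in M_2,
\]
where the middle step uses compatibility to identify the coset of $\xi(a)$ with $\eta(aZ(G))$. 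Nontriviality of $N_2$ then follows from $N_2\ge M_2\neq1$.

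I expect the main obstacle to be this compatibility $\eta(aZ(G))=\xi(a)Z(H)$ on $G'$. It is classical (Hall), but a self-contained argument needs care with products of several commutators. If the direct verification becomes messy, I would instead pass through Weichsel's group $C=G\Yup H$ from \eqref{eq:4.1}. One first checks that the Camina property lifts along the isoclinism $C\to G$, using $C'\cap Z_H=1$ so that $C'$ maps isomorphically onto $G'$. One then checks that it descends along $C\to H$. In both steps the compatibility is automatic, because the isoclinism is induced by a surjective homomorphism.
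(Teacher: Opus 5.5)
Your proof is correct. Its outline matches the paper's: lift $h\in H\setminus N_2$ to $g$ with $\eta(gZ(G))=hZ(H)$, note $g\notin N_1$, and use the compatibility $\eta(aZ(G))=\xi(a)Z(H)$ on $G'$ to get normality of $M_2$ (your computation $\xi(a)^y=\xi(a^x)$ is the paper's $\xi(m^g)=\xi(m)^h$). The central step is where you differ.

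The paper pushes the coset identity $gm_1Z(G)=g^xZ(G)$ through $\eta$. That only gives $h\xi(m_1)=(hz)^y$ for some unspecified $z\in Z(H)$, where $\eta(xZ(G))=yZ(H)$. In other words, $h\xi(m_1)$ is shown to be conjugate to \emph{some} element of $hZ(H)$. The paper's closing remark, that this element lies outside $N_2$, does not force $z=1$.

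You instead recast the Camina condition as $M_1\subseteq[g,G]$ and use the commutator square directly. This gives the exact identity $\xi([g,G])=[h,H]$, so $h\xi(m_1)=h^y$ on the nose. Your route therefore closes the gap the coset computation leaves open.

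You also gain in two smaller ways:
\begin{itemize}
\item You derive $Z(G)\le N_1$ and $M_1\le G'$ from the same reformulation, instead of citing Lemma~\ref{lem:4.5}.
\item You verify $M_2\le N_2$, which the paper never addresses explicitly.
\end{itemize}

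The compatibility you flag as the main obstacle is immediate. The maps $a\mapsto\eta(aZ(G))$ and $a\mapsto\xi(a)Z(H)$ are both homomorphisms $G'\to H/Z(H)$. They agree on commutators by the commutative square, and commutators generate $G'$. So the detour through Weichsel's group is unnecessary.
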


We need the following lemma to justify some statements in the theorem.

\begin{lemma} \label{lem:4.5}
If $(G,N,M)$ is a Camina triple, then $Z (G) \le N$ and $M \le G'$ hold.
\end{lemma}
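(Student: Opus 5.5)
We must prove Lemma \ref{lem:4.5}: if $(G,N,M)$ is a Camina triple, then $Z(G)\le N$ and $M\le G'$. Both inclusions follow directly from the defining property that every $g\in G\setminus N$ is conjugate to every element of the coset $gM$.

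\emph{Step 1: $M\le G'$.} Since $N$ is a proper subgroup, we may pick some $g\in G\setminus N$. For any $m\in M$, the element $gm$ lies in $gM$, so by the Camina property there is $x\in G$ with $gm = x^{-1}gx$. Then
\[
m = g^{-1}x^{-1}gx = [g,x] \in G'.
\]
Since $m\in M$ was arbitrary, $M\le G'$.

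\emph{Step 2: $Z(G)\le N$.} Suppose instead that some $z\in Z(G)$ does not lie in $N$. The conjugacy class of a central element is $\{z\}$, so the Camina property forces $zM=\{z\}$. Hence $M=1$. This contradicts the requirement in Definition \ref{def:1.1} that $M$ be nontrivial. Therefore $Z(G)\le N$.

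Neither step is a genuine obstacle. The only point needing care is that Step 1 requires $G\setminus N\neq\emptyset$, which is guaranteed because $N$ is proper, and Step 2 uses the nontriviality of $M$. These two inclusions make the definitions in Theorem \ref{prop:4.5} meaningful. Since $Z(G)\le N_1$, the quotient $N_1/Z(G)$ is a subgroup of $G/Z(G)$, so $\eta(N_1/Z(G))$ defines a subgroup $N_2\ge Z(H)$. Since $M_1\le G'$, the image $\xi(M_1)$ is defined.
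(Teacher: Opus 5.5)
Your proof is correct, and it is more self-contained than the paper's. The paper proves neither inclusion directly: it quotes Lemma 2.4(i) of \cite{MR3150725} for $Z(G)\le N$. For $M\le G'$ it quotes Theorem 2.1 of the same paper, which says that for every $g\in G\setminus N$ and $z\in M$ there is $y$ with $[g,y]=z$.

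Your Step 1 is exactly that commutator characterization, read off from the definition: $gm=x^{-1}gx$ gives $m=g^{-1}x^{-1}gx=[g,x]$. Your Step 2 replaces the citation by the observation that a central element outside $N$ has a singleton conjugacy class, which forces $M=1$.

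The paper's route is shorter on the page but rests on external results. Yours shows that both inclusions follow immediately from Definition \ref{def:1.1}, using only that $N$ is proper and that $M$ is nontrivial.
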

\begin{proof}
   The first assertion is shown in \cite[Lemma 2.4 (i)]{MR3150725}. 
   By Theorem 2.1 of the same paper, for all $g \in G \setminus N$ and $z \in M$, there exists $y \in G$
   such that $[g,y]=z$. This readily implies the second assertion.
\end{proof}

\begin{proof}[Proof of Theorem \ref{prop:4.5}]
    By Lemma \ref{lem:4.5}, $N_2$ is a normal subgroup of $H$ containing $Z (H)$. Also 
    $M_2$ is a normal subgroup of $H$ contained in $H'$ since,
     if $\eta (gZ (G)) = h Z(H) \ (g \in G, h \in H)$, then it is easy to see that
    $\xi (m^g) = \xi (m)^h $ holds for $m \in G'$.

    It remains to prove that, every $h \in H-N_2$ is conjugate to all of $hM_2$.
    We take $g \in G$ satisfying $\eta (g Z(G)) = h Z(G)$. Since $h \not\in N_2$,
    we see that $g \not\in N_1$. As $(G,N_1,M_1)$ is a Camina triple, there exists $x \in G$
    such that $g m_1  = g^x $ for all $m_1 \in M_1$.
This yields $g m_1 Z(G) = g^x Z(G)$. Sending the both sides by $\eta$, we obtain 
$h \eta (m_1) Z (H) = \eta (g)^x Z (H)$. It follows from the definition of 
isoclinism that $\eta (m_1) = \xi (m_1) $ for $m_1 \in G'$. Thus we have
$h \xi (m_1) = \eta (g)^x z = (\eta (g) z)^x$  for some $z \in Z(H)$.
Since $z \in N_2$ and $\eta (g) \not\in N_2$, we see $\eta (g) z \not\in N_2$.
This is what we want to prove.
\end{proof}

% If a Camina triple of $G$ satisfies the condition in Theorem \ref{thm:1.1}, then
% so does every isoclinic group.

\section{Number fields sharing an Artin $L$-function} \label{sec:5}
In this section, we give several explicit examples of Theorem \ref{thm:1.1}.
The following examples illustrate what is in the scope of Theorem 1.2 and what is out of the scope.
They show that its hypotheses are close to being optimal.

\subsection{Frobenius groups} \label{sec:5.1}
Let $G=F_{q} \cong \mathbb{F}_q \rtimes  \mathbb{F}_q^{\times}$ be a Frobenius group, where 
$\mathbb{F}_q$ is the finite field of $q$-elements with a prime power $q$.
 We assume that there exist two distinct prime divisors $p$ and $\ell$
of $q-1$. Then there are maximal normal subgroups $N_1$ and $N_2$ of $G$ of index $p$ and $\ell$,
respectively.
To be more precise, let us fix a presentation of $G$:
\[
G=  \langle a,b \mid a^q = b^{q-1} =1,
bab^{-1} = a^g    \rangle,
\]
where $g$ is a primitive root modulo $q$.
Two maximal normal subgroups are, then, given By
\[
N_1=\langle a,b^p  \rangle , \quad N_2=\langle a,b^{\ell}  \rangle.
\]
It is easy to see that both 
$(G,N_1, \langle a \rangle )$ and $(G,N_2, \langle a \rangle )$ are Camina triples.
We also have $H=N_1 \cap N_2 = \langle a,b^{p\ell }  \rangle$.
Both maximal normal subgroups are metacyclic and, therefore, the character tables can be computed 
as described in \cite[Section 47]{MR0144979}.
Indeed, there are $p$ characters $\psi $ in $\irr (N_1)$ of degree $(q-1)/p$ and 
$\ell $ characters $\gamma$ in $\irr (N_2)$ of degree $(q-1)/\ell $. Both of them are induced from a 
linear character of $\langle a \rangle $. Therefore every irreducible constituent $\alpha$ of 
the restriction of these characters to $H$ is also induced from $\langle a \rangle$ and we have
$I_G (\alpha ) = H$. Hence we can apply Theorem \ref{thm:1.1} and obtain $\psi^G = \alpha^G=\gamma^G$ for 
every pair of 
$\psi \in \irr (N_1 \,|\, \alpha) $ and $\gamma \in \irr (N_2 \,|\, \alpha)$.

If $K/\Q$ is a $G$-extension, then the equality 
\[
L (s, \psi, K/K^{N_1}) = L (s, \gamma , K/K^{N_2})
\]
holds.
Note that the absolute degrees of $K^{N_1}$ and $K^{N_2}$ are distinct and until now, 
no such example has been known.
Note also 
that if $q-1$ has more primes divisors, there are more $L$-functions involved in the above equality.

\begin{example}
    Let us consider the Frobenius group $G=F_7$ of order $42$.
    Hence we let $q=7, p=2, \ell =3 $ in the above notation.
    Let $K$ be the splitting field of the polynomial $f(X) =X^7 -2 \in \Q [X] $.
    We can show that the Galois group of $K/\Q$ is isomorphic to $G$.
    The subfields in which we are interested are
    \begin{align*}
        K^{N_1} & = \Q (\sqrt{-7}),\\
         K^{N_2} &:  X^3 - X^2 - 2X + 1 , \\
         K^{H} & = \Q (\zeta_7 ).
    \end{align*}
    Of course, $K^{N_2}$ is a cubic cyclic subfield of $\Q (\zeta_7)$.
        Six nontrivial linear characters $\alpha_i \, (i=1,\ldots , 6)$ of $H$ are conjugate under $G$,
    and therefore, there are three degree-$2$ characters $\psi_j \, (j=1,\ldots ,3)$ in $\irr (N_1)$ 
    and two degree-$3$ characters $\gamma_k \, (k=1,2)$ in $\irr (N_2)$ such that 
    \[
    L (s, \alpha_i, K/K^H) = L (s,\psi_j , K/N^{N_1}) = L (s, \gamma_k, K/K^{N_2})
    = L (s, \chi, K/\Q)
    \]
    holds with $\chi = \alpha^G $. The Dirichlet series of the above $L$-functions is
    \[
    L (s, \chi, K/\Q ) = \frac{1}{1^s} - \frac{1}{29^s} -\frac{1}{43^s} -\frac{1}{71^s}+ \cdots .
    \]
\end{example}

As we have seen, this is a typical situation to which we can apply Theorem \ref{thm:1.1} with the condition (i).

\subsection{Extraspecial groups} \label{sec:5.2}
Let $p$ be a prime number.
Extraspecial $p$-groups are nonabelian $p$-groups such that $\Phi (G) = G' = Z(G)$ 
is a cyclic group of prime order $p$, where $\Phi (G)$ is the Frattini subgroup of $G$.
If $G$ is an extraspecial group of order $p^{2m+1}$, then
the quotient group $G/Z(G)$ is an elementary abelian group of order $p^{2m}$ and 
the maximal abelian normal subgroups are of order $p^{m+1}$ (see \cite[III.13.7 Satz]{MR0224703}).
Every subgroup of $G$ of index $p$ is always a normal subgroup of $G$ and every such group $N$
sits in a Camina triple $(G,N,Z(G))$.
Let $N_1$ be such a group. It contains a maximal abelian normal subgroup $A_1$.
Then by \cite[III.13.7 Satz f)]{MR0224703}, there exists a maximal abelian 
normal subgroup $A_2$ so that  $G=A_1 A_2 $ and $A_1 \cap A_2 = Z(G)$ hold.
Let $N_2$ be a maximal abelian normal subgroup of $G$ containing $A_2$.
From $A_1 A_2 = G$, it follows $N_1 \neq N_2$.     
The group $G$ has $p-1$ irreducible faithful 
characters of degree $p^m$, which are induced from a character 
of a maximal abelian normal subgroup that is an extension of a non-trivial linear 
character of $Z(G)$ (see \cite[V.16.14 Satz]{MR0224703}).
Let $\psi \in \irr (N_1)$. There exists $\alpha \in \irr (A_1) $ such that 
$\alpha^{N_1} = \psi$ and $\alpha_{Z (G)} \neq 1 $ hold.
Since $\alpha_{Z (G)}$ is a linear character, $\psi = (\alpha_{Z (G)})^{N_1} 
= ((\alpha_{Z (G)})^H)^{N_1} $ yields $(\alpha_{Z (G)})^H \in \irr (H)$ and, thus 
$\psi_H \in \irr (H)$ and $I_G (\psi_H) =H $. Therefore $\psi $ satisfies the condition (ii) of 
Theorem \ref{thm:1.1}. Hence there exists $\gamma \in \irr (N_2)$ such that $\gamma^G = \psi^G$.
This recovers a part of Proposition \ref{prop:2.2}. 

\begin{remark}
As in this case, we observe that 
the more Camina triples there exist in $G$, the more number fields share an $L$-function.
On the other hand, the existence of Camina triples is not a necessary condition for the existence of multiply induced characters
See Example \ref{ex:5.7} below.
\end{remark}

For explicit examples of number fields sharing an $L$-function in this extraspecial case, 
see \cite[Example 3.4]{MR4474773}.

\subsection{Primitive characters} \label{sec:5.3}
Examples constructed so far are monomial groups. Thus any multiply induced characters are inductions
from linear characters of its subgroups and therefore, the $L$-function shared by number fields
are Hecke $L$-functions with certain ray class characters.
It is natural to ask whether there exist a multiply induced character induced from 
primitive characters of subgroups.
In this case, number fields share a nonabelian Artin $L$-function. Indeed, this question also motivates 
the research in this paper.

As in Theorem \ref{thm:1.1}, suppose that $\chi \in \irr (G)$ is multiply induced 
from primitive characters  $\psi \in \irr (N_1)$ and $\gamma \in \irr (N_2)$ 
with maximal normal subgroups $N_1$ and $N_2$.
It is obvious that both $N_1$ and $N_2$ are non-monomial, and hence so is $G$
because $\psi^G = \chi $ is not a monomial character.
By computation, we found that there is no non-monomial group of order less than $96$ 
to which we can apply Theorem \ref{thm:1.1}.
Among the groups of order $96$,
there are five groups of order $96$ up to isomorphism having multiply induced 
    primitive characters. The five groups are divided in two isoclinism classes.

\begin{example} \label{ex:5.2}
     The first isoclinism class consists of the groups with GAP id. $(96,190),(96,191),
    (96,193)$. The first group $G_1$ is a Schur covering group of $\mathrm{GL}(2,3)$.
    % Schur covers are isoclinic. But isoclinic groups of same order may not be a Schur cover.
    The group $G_1$ has a unique normal subgroup $N_1$ isomorphic to $C_2 \times \mathrm{SL}(2,3)$.
    There exists only one Camina triple $(G,N_1, Z(G))$ in $G$ and 
    there are six degree $2$ characters $\psi_i \, (i=1,\ldots , 6)$ in $\irr (N_1 \,|\, Z(G))$.
    Let $N_2$ be a unique maximal normal group of $G_1$ isomorphic to $\mathrm{GL}(2,3)$.
    In this case, $H=N_1 \cap N_2$ coincide with a normal subgroup isomorphic to $\mathrm{SL}(2,3)$.
    Let $\psi_1$ and $\psi_2$ be the rational characters in $\irr (N_1 \,|\, Z(G))$.
    They share  $\alpha \in \irr (H \,|\, \psi_1) \cap \irr (H \,|\, \psi_2 )$ satisfying
    the assumptions of Theorem \ref{thm:1.1}: $I_G (\alpha) = G$. Therefore, the theorem 
    implies that there exists 
    $\gamma \in \irr (N_2) $ such that $\gamma^G = \psi_1^G$ and $\gamma_H = \alpha$.
    Note here that $(\gamma^G)_{N_1} = \psi_1 + \psi_2$ and hence $\gamma^G = \psi_2^G$
    also holds.
    There is another maximal normal subgroup $N_3$ isomorphic to $\mathrm{CSU}(2,3)$,
    which is isoclinic to $N_2$. A similar argument applies to $N_3$ and there exists 
    $\delta \in \irr (N_3)$ such that $\delta^G = \psi_1^G$. Here $\delta $ is also 
    a primitive character (see Lemma \ref{lem:5.5} below).
\end{example}

\begin{example} \label{ex:5.3}
    The second isoclinism class consists of the groups $Q_8.A_4=(96,201)$ and $D_4.A_4=(96,202)$.
    They have four Camina triples. One of them is of the form $(G,\mathrm{SL}(2,3),Z(G))$ this does not 
    satisfy the assumption of Theorem \ref{thm:1.1}, since $[G:\mathrm{SL}(2,3)]=4$ is not square-free.
    The other three triples are of the form $(G,N, Z(G) ) $ with $[G:N ]=2$.
    The intersection of $N$'s coincides with $H \cong \mathrm{SL}(2,3)$.
    We can show that all six characters $\psi$ of degree $2$ of each 
    $N$ satisfy $\psi_H \in \irr (H)$,
    and $I_G ( \psi_H ) = G$ and hence the condition of Theorem \ref{thm:1.1} is satisfied.
    Every six $\psi$ (they form three conjugacy classes) 
    induces three irreducible characters of $G$, each of which is induced multiply
    from three $N$'s. The situation is described by the following graph:
    \[
    \xymatrix{
         &  &  & & \irr (G)_4 & &&& \\
       & \chi_1 &  &  & \chi_2 & & & \chi_3 &  \\
    \psi_{11} \ar@{-}[ru] & \psi_{12} \ar@{-}[rrru] & \psi_{13} \ar@{-}[rrrrru] & \psi_{21} \ar@{-}[llu] & \psi_{22} \ar@{-}[u]& \psi_{23} \ar@{-}[rru] 
    & \psi_{31} \ar@{-}[lllllu]& \psi_{32} \ar@{-}[lllu] & \psi_{33} \ar@{-}[lu] & \\
              & \irr (N_1)_2 &          &           & \irr (N_2)_2 &     &       & \irr (N_3)_2 &  &
    }
    \]

    Let $f(X)=x^{16} + 8x^{14} + 28 x^{12} + 60 x^{10} + 90 x^8 + 88x^6 + 52x^4 + 12x^2 + 1 \in \Q [X]$. 
    The Galois group of the splitting field $K$ of $f(X)$ over $\Q$ is isomorphic to $G=(96,202)$ and contains
    three quadratic subfields $k=\Q (\sqrt{2}), \Q (\sqrt{-2}), \Q (\sqrt{-1})$.
    Three $L$-functions attached to the irreducible characters of degree $4$ of $\gal (K/\Q)$ 
    are each shared by the Artin $L$-functions of Galois extensions of these quadratic subfields. One of the $L$-series shared by 
    three extensions is given by
    \[
    L(s,\chi_2, K/\Q ) = \frac{1}{1^s} +\frac{\omega}{9^s} -\frac{2\omega^2}{17^s} -\frac{\omega^2}{25^s}- \frac{1}{49^s} +\cdots 
    \]
    with $\omega = \exp \left( \frac{2\pi \sqrt{-1}}{6} \right)$.
    The coefficients tell us that this is not a Hecke $L$-function.
\end{example}

The following lemma deduced from Proposition \ref{prop:4.3} sometimes helps to check the primitivity of a character.
\begin{lemma} \label{lem:5.5}
    Suppose that a character $\chi \in \irr (G)$ is multiply induced from its normal 
    subgroups $N_1$ and $N_2$. If $N_1$ and $N_2$ are isoclinic and
    $\chi $ is induced from a primitive character of $N_1$, then $\chi $ is induced also
    from a primitive character of $N_2$.
\end{lemma}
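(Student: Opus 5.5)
Suppose $\chi = \psi^G$ with $\psi \in \irr(N_1)$ primitive and $\chi = \gamma^G$ for some $\gamma \in \irr(N_2)$. We want to show that $\gamma$ can be chosen primitive, and the plan is to transport primitivity across the isoclinism $N_1 \sim N_2$ using Corollary \ref{cor:4.4}.

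The first step fixes an isoclinism $(\eta,\xi)$ between $N_1$ and $N_2$ and passes to a common stem group $S$ of their isoclinism class. The statements of Proposition \ref{prop:4.3} and Corollary \ref{cor:4.4} assume one side is a stem group, so we apply Corollary \ref{cor:4.4} twice: once to $N_1 \sim S$ and once to $N_2 \sim S$. Under the correspondence \eqref{eq:4.2}, $\psi$ matches some $\psi_0 \in \irr(S)$, and $\psi_0$ is primitive because $\psi$ is. The character $\psi_0$ in turn matches a character $\tilde\psi \in \irr(N_2)$, and by Corollary \ref{cor:4.4} again $\tilde\psi$ is primitive.

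The main obstacle is the second step: showing that $\tilde\psi$ actually induces $\chi$, or at least that some primitive character of $N_2$ corresponding to $\psi$ does. Proposition \ref{prop:4.2} only determines $\tilde\psi$ up to twisting by linear characters $\tau_i$ and the factor $\mu^*$, and nothing yet ties these choices to $\gamma$. Two observations make the freedom manageable.

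\begin{enumerate}
\item Twisting by a linear character preserves primitivity. Indeed, if $\theta\lambda = \beta^{N_2}$, then $\theta = (\beta\lambda^{-1}_{U})^{N_2}$ for the corresponding subgroup $U$.
\item Every character of $N_2$ attached to $\psi_0$ has the same degree $\psi(1)$. Restricted to $Z(N_2)$ it is a multiple of a linear character, and these linear characters run over all the central characters compatible with $\psi_0$ on $N_2' \cong N_1'$.
\end{enumerate}

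Now $\gamma^G = \psi^G$ gives $\gamma(1)[G:N_2] = \psi(1)[G:N_1]$. Since $N_1$ and $N_2$ are isoclinic they have the same order, so $[G:N_1] = [G:N_2]$ and hence $\gamma(1) = \psi(1)$.

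To finish, I would compare $\gamma$ with the family $\{\tilde\psi\tau_i\}$. The aim is to show that $\gamma$ lies in the family of characters of $N_2$ corresponding, via the isoclinism, to the characters of $N_1$ that induce $\chi$. By Frobenius reciprocity, $\chi_{N_2}$ is a sum of $G$-conjugates of $\gamma$, and these conjugates vanish off $Z(N_2)$-cosets in the same pattern as $\psi$ does under $\eta$. This is the content of the vanishing statement in the proof of Proposition \ref{prop:4.3}: the absolute values $|\tilde\chi|$ are preserved along the correspondence.

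The key claim is therefore that $|\gamma|$ equals $|\psi|$ transported via $\eta$. If this holds, then $\gamma = \tilde\psi\lambda$ for a linear character $\lambda$ of $N_2$ trivial on $N_2'$. By the first observation, $\gamma$ is then primitive.

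If this matching of absolute values turns out to be hard to prove, the fallback is the contrapositive. Suppose $\gamma$ were imprimitive, say $\gamma = \beta^{N_2}$ for a proper subgroup $U < N_2$. By the remark after Proposition \ref{prop:4.3}, $U \ge Z(N_2)$. Proposition \ref{prop:4.3} then transports this to $\eta^{-1}(U)$, showing that the character of $N_1$ corresponding to $\gamma$ is imprimitive. Since that character agrees with $\psi$ up to a linear twist, $\psi$ would be imprimitive, contradicting the hypothesis.
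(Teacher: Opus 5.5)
Your first step, transporting $\psi$ to a primitive $\tilde\psi\in\irr(N_2)$ through a stem group and Corollary~\ref{cor:4.4}, is the idea behind the paper's one-line justification that the lemma is ``deduced from Proposition~\ref{prop:4.3}''. Your remark that linear twists preserve primitivity is correct. The genuine gap is the step you flag yourself: nothing shows that $\gamma$ is a linear twist of $\tilde\psi$, i.e.\ that $|\gamma|$ is the $\eta$-transport of $|\psi|$. The isoclinism $(\eta,\xi)$ between $N_1$ and $N_2$ is abstract. It carries no information about how $N_1$ and $N_2$ sit inside $G$, or about $\chi$. Frobenius reciprocity only says that $\gamma$ is a constituent of $\chi_{N_2}$, and the vanishing pattern of $\chi_{N_2}$ is dictated by $G$, not by $\eta$. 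The vanishing statement in the proof of Proposition~\ref{prop:4.3} compares one character with its own correspondent, not two characters that happen to induce the same $\chi$. Your contrapositive fallback rests on the same claim (``agrees with $\psi$ up to a linear twist''). Also, isoclinic groups need not have the same order ($D_4$ and $D_4\times C_2$ are isoclinic), so even $\gamma(1)=\psi(1)$ is not available in general. The paper does not close this gap either, and it cannot be closed: the lemma is false as stated.

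Here is a counterexample. Let $S=\mathrm{SL}(2,3)$, with normal Sylow $2$-subgroup $Q\cong Q_8$ and $\pi\colon S\to S/Q\cong\mathbb{Z}/3$. Let $E=\langle a,b\rangle$ be extraspecial of order $27$ and exponent $3$, with $z=[a,b]$ and $\phi_1,\phi_2\colon E\to\mathbb{Z}/3$ the coordinates of $E/Z(E)$ with respect to $\bar a,\bar b$. Put
\[
G=\{(s_1,s_2,e)\in S\times S\times E \mid \pi(s_1)=\phi_1(e),\ \pi(s_2)=\phi_2(e)\}.
\]
Let $N_1,N_2$ be the preimages of $\langle a,z\rangle$ and $\langle b,z\rangle$ under the projection $G\to E$, and set $H=N_1\cap N_2=Q\times Q\times Z(E)$. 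The map $(s_1,s_2,a^{\pi(s_1)}z^k)\mapsto(s_1,s_2,k)$ is an isomorphism $N_1\cong S\times Q\times C_3$, and likewise $N_2\cong Q\times S\times C_3$. So $N_1\cong N_2$, and both are normal of index $3$.

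Let $\chi$ be the restriction to $G$ of $\rho\times 1_S\times\theta$, with $\rho\in\irr(S)_2$ and $\theta\in\irr(E)_3$. Let $\mu$ be the central character of $\theta$ and $\epsilon$ a faithful linear character of $S/Q$. Since $\theta_{\langle a,z\rangle}$ is the sum of the three extensions of $\mu$, we get $\chi_{N_1}=\psi_0+\psi_1+\psi_2$, with $\psi_j$ corresponding to $\rho\epsilon^j\times 1_Q\times\mu$. Conjugation by $(1,s,b)$ with $\pi(s)=1$ multiplies $\psi_j$ by a nontrivial character of $N_1/H$, because $bab^{-1}=az^{\pm1}$. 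Hence $I_G(\psi_j)=N_1$ and $\chi=\psi_j^G\in\irr(G)$. Each $\psi_j$ is primitive: it has degree $2$, every subgroup of index $2$ in $S\times Q\times C_3$ contains $S\times 1\times C_3$, and $\psi_j$ is already irreducible on that subgroup.

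On the other side, $\chi_{N_2}=\gamma_0+\gamma_1+\gamma_2$, with $\gamma_j$ corresponding to $\rho_Q\times\epsilon^j\times\mu$, so $\chi=\gamma_j^G$. By Frobenius reciprocity these are the only characters of $N_2$ inducing $\chi$. Each is induced from $C_4\times S\times C_3$, because $\rho_Q$ is the degree-$2$ character of $Q_8$. Thus $\chi$ is induced from a primitive character of $N_1$ but from no primitive character of $N_2$, even though $N_1\cong N_2$.

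What does hold, and is all that Example~\ref{ex:5.2} needs, is a Mackey argument. Suppose $\gamma_K$ is irreducible and primitive for some $K\trianglelefteq N_2$; in that example $K=H$ and $\gamma_H$ is a degree-$2$ character of $\mathrm{SL}(2,3)$. Then $\gamma$ is primitive. Indeed, $\gamma=\beta^{N_2}$ with $U<N_2$ would force $UK=N_2$ and $\gamma_K=(\beta_{U\cap K})^K$ with $U\cap K<K$, contradicting the primitivity of $\gamma_K$.
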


It is natural to ask whether we can construct aimed Artin $L$-functions associated with 
primitive characters of arbitrary large degree.
The following lemma due to Hekster helps us to increase the degree of primitive characters.

\begin{lemma}[\protect{\cite[(4.5) Theorem]{MR783006}}] \label{lem:5.2}
    Let $G=G_1 \times G_2$ be a direct product of groups, 
    $\chi_i \in \irr (G_i) \, (i=1,2)$ and $\chi = \chi_1 \chi_2$.
    If $\chi_1$ and $\chi_2$ are primitive, then $\chi$ is primitive.
\end{lemma}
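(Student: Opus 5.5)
We argue by contradiction. Suppose $\chi=\chi_1\chi_2$ is not primitive, so $\chi=\theta^G$ for some proper subgroup $U<G$ and some $\theta\in\irr(U)$; we may take $U$ minimal with this property. Identify $G_1=G_1\times 1$ and $G_2=1\times G_2$. The only structural input is that restricting $\chi$ to either factor gives a homogeneous character: $\chi_{G_1}=\chi_2(1)\chi_1$ and $\chi_{G_2}=\chi_1(1)\chi_2$. Write $\pi_i$ for the projections, $P_i=\pi_i(U)$, $K=U\cap G_1$ and $L=U\cap G_2$. Then $K\trianglelefteq P_1$, $L\trianglelefteq P_2$, and by Goursat $P_1/K\cong U/(K\times L)\cong P_2/L$.

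\emph{Step 1: $U$ cannot contain a factor.} Suppose $G_1\le U$. Then $U=G_1\times L$, so $\theta=\beta_1\times\beta$ with $\beta_1\in\irr(G_1)$ and $\beta\in\irr(L)$. Because $\theta_{G_1}$ is a constituent of the homogeneous character $\chi_{G_1}$, we get $\beta_1=\chi_1$. Hence
\[
\chi=\theta^G=\chi_1\times\beta^{G_2},
\]
which forces $\chi_2=\beta^{G_2}$ with $L<G_2$. This contradicts the primitivity of $\chi_2$. The case $G_2\le U$ is symmetric.

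\emph{Step 2: $U$ is subdirect.} Put $V=UG_1=G_1\times P_2$. Then $\chi=(\theta^V)^G$ with $\theta^V$ irreducible. If $V\neq G$, Step 1 applied to $V$ gives a contradiction, so $P_2=G_2$. By symmetry $P_1=G_1$. Consequently $K\trianglelefteq G_1$, $L\trianglelefteq G_2$, $K\times L\trianglelefteq G$, and $G_1/K\cong G_2/L\cong Q$ with $[G:U]=|Q|$. Moreover $|Q|>1$, since otherwise $U=G$.

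\emph{Step 3: counting with Clifford theory.} A primitive character restricts homogeneously to every normal subgroup. Hence $(\chi_1)_K=e\varphi$ and $(\chi_2)_L=f\varphi'$ for invariant characters $\varphi,\varphi'$, and so $\chi_{K\times L}=ef\,(\varphi\times\varphi')$. Since $K\times L\le U$ is normal in $G$, Mackey gives $\chi_{K\times L}=\sum_{x\in G/U}(\theta^x)_{K\times L}$. Every term is homogeneous with constituent $\varphi\times\varphi'$, so $\theta_{K\times L}=e'(\varphi\times\varphi')$ and
\[
ef=|Q|\,e'.
\]
On the other hand, the standard Clifford bound for invariant characters gives $e^2\le[G_1:K]=|Q|$ and $f^2\le|Q|$. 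Therefore $e'=1$ and $e^2=f^2=|Q|$. So $\chi_1$ is fully ramified over $K$, $\chi_2$ is fully ramified over $L$, and $\theta$ is an extension of $\varphi\times\varphi'$ to $U$.

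\emph{Step 4: the fully ramified case (main obstacle).} Full ramification gives that $\chi_1$ vanishes on $G_1\setminus K$ and $\chi_2$ vanishes on $G_2\setminus L$, so $\chi$ vanishes off $K\times L$. My first attempt at a contradiction is to compare $\langle\chi_U,\theta\rangle=1$, which holds by Frobenius reciprocity, with the decomposition of $\chi_U$.
\begin{itemize}
\item Because $\chi_U$ is supported on $K\times L$, its constituents are all the $|Q|$ extensions of $\varphi\times\varphi'$ to $U$ (these are $\theta\lambda$ for $\lambda\in\irr(U/(K\times L))$ linear, and $U/(K\times L)\cong Q$ is abelian here).
\item Each such extension occurs with the same multiplicity.
\item Computing $\langle\chi_U,\chi_U\rangle=|Q|$ from the support of $\chi_U$, and combining it with $\chi(1)=|Q|\theta(1)$, pins down that multiplicity.
\end{itemize}
The extra input I would then bring in is the character $\lambda\mapsto$ ``twist on the $G_1$-side only''. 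From $\theta$ one builds an extension of $\varphi$ to a proper subgroup of $G_1$ lying between $K$ and $G_1$, namely the image of the diagonal piece of $U$. The goal is to show that $\chi_1$ is induced from it, contradicting the primitivity of $\chi_1$.

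Making this last descent precise is where I expect the real difficulty. I am not confident the multiplicity count by itself yields a contradiction. If it does not, I would fall back on Hekster's original argument, which handles exactly this fully ramified configuration.
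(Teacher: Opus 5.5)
Your Steps 1--3 are correct. The Goursat reduction is sound. So is the observation that $\chi$ is induced from every overgroup of $U$, which forces $U$ to be subdirect. The count $ef=|Q|e'$ together with $e^2,f^2\le |Q|$ then does force $\chi_1$ to be fully ramified with respect to $G_1/K$ and $\theta$ to extend $\varphi\times\varphi'$. (The paper gives no argument for this lemma; it only cites Hekster.)

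The genuine gap is Step 4, and the route you sketch cannot close it, because the configuration you have reached is internally consistent. Suppose some $\chi_1\in\irr(G_1)$ were fully ramified over a proper normal subgroup $K$. Tensor a unitary projective representation of $G_1$ extending $\varphi$ with its complex conjugate. On the fiber product $\{(g,h)\in G_1\times G_1 : gK=hK\}$ the two factor sets are inverse to each other, so this gives an honest representation extending $\varphi\times\overline{\varphi}$. Then $\chi_1\times\overline{\chi_1}$ is induced from it. So nothing extracted from $U$ alone can give a contradiction.

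Accordingly, your multiplicity count yields nothing. By Gallagher, the constituents of $\chi_U$ are the $\theta\beta$ with $\beta\in\irr(Q)$, each with multiplicity $\beta(1)$, and $\sum_\beta\beta(1)^2=|Q|=\langle\chi_U,\chi_U\rangle$. Your assertion that $Q$ is abelian is unjustified. Also, the projection of the ``diagonal piece'' of $U$ to $G_1$ is all of $G_1$ by Step 2, so there is no proper subgroup to descend to.

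The missing idea is a second use of the primitivity of $\chi_1$, beyond homogeneity on $K$: a primitive character is never fully ramified over a proper normal subgroup. Full ramification means $\chi_1$ vanishes on $G_1\setminus K$. Hence $\langle(\chi_1)_M,(\chi_1)_M\rangle=[G_1:M]$ for every normal subgroup $M$ with $K\le M\trianglelefteq G_1$. Primitivity makes $(\chi_1)_M=a\psi$ homogeneous, so $[G_1:M]=a^2$ must be a perfect square, which is absurd if $[G_1:M]$ is prime.

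An equivalent route is to choose $U$ maximal rather than minimal among proper subgroups inducing $\chi$. Then $U$ is a maximal subgroup of $G$. The overgroups of the fiber product $U$ correspond to normal subgroups of $Q$, so $Q$ is simple, and if $Q\cong C_p$ then $e^2=|Q|=p$ is already a contradiction.

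Either way, this finishes the proof whenever $Q$ has a normal subgroup of prime index. In particular it covers the case where $G_1$ or $G_2$ is solvable, which includes every application in the paper. In general you must also exclude the case where $G_1/M$, with $M/K$ maximal normal in $G_1/K$, is a nonabelian simple group over which $\chi_1$ is fully ramified. This is what the Howlett--Isaacs theorem provides: groups of central type are solvable. Its proof uses the classification of finite simple groups.
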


%\begin{proposition}
%Suppose that $G_1$ has an irreducible primitive character $\psi$ and $G_2$ has a multiply induced irreducible
%character $\chi=\phi_i^G \, (i=1,\ldots , m)$ with primitive character $\phi_i$ of subgroups $H_i \le G_2$.
%Then the characters $\psi  \phi_i \in \irr (G_1 \times H_i)$ are 
%irreducible primitive character and they induce $\psi  \chi \in \irr (G_1 \times G_2)$.
%\end{proposition}
%\begin{proof}
%The first assertion follows readily from Lemma \ref{lem:5.2}.
%The last assertion is also valid, since $(\psi  \phi_i)^G = \psi^G \phi_i^G = \psi  \chi$ for all $i$.
%\end{proof}

\begin{example} \label{6}
Let $G_1 = (96,190)$ as in Example \ref{ex:5.2} and $G_2 = D_4 = 2^{1+2}_{+}$ as considered
in Section \ref{sec:5.2}. 

The normal subgroup $N_1 \cong C_2 \times \mathrm{SL}(2,3)$ of $G_1$ has 
$6$ degree-$2$ characters $\psi_i \, (i=1,\ldots , 6)$ and they are all primitive, since they are of the form $\phi \chi $
with $\phi \in \irr (\mathrm{SL}(2,3))_2$ and $\chi \in \irr (C_2)$.

On the other hand, the unique $\gamma \in \irr (D_4)_2$ is multiply induced from $\delta_j 
\in \irr (H_j) \, (j=1,2,3)$ of the subgroups $H_j$ of order $4$.

Lemma \ref{lem:5.2} implies that the characters $\psi_i \delta_j \, (i=1,\ldots ,6, j=1, \ldots, 3)$ 
of $N_1 \times H_i $ are primitive and induce an irreducible character of $G_1 \times D_4$, since 
$(\psi_i \delta_j)^{G_1 \times D_4} =  \psi_i^{G_1} \delta_j^{D_4}$.
Hence $L (s,\psi_i \delta_j,  K/K^{N_1 \times H_j})$ of primitive characters are identical.

By taking self-direct product of this example, 
we obtain $L$-functions with primitive characters of arbitrary large degree.
 \end{example}

\subsection{Induction from non-normal groups}
Although our theorems produce many examples of groups with multiply 
induced characters, our theorems only concern induction from normal subgroups.
In the end of this paper, we give an example of multiply induced characters 
involving non-normal subgroup.

\begin{example} \label{ex:5.7}
The smallest such groups are $\mathrm{GL}(2,3)=(48,29)$ and $\mathrm{CSU}(2,3)=(48,28)$.
They are isoclinic and have no Camina triple.
Here we let $G=\mathrm{GL}(2,3)=(48,29)$. Obviously $N=\mathrm{SL}(2,3)$ 
is a normal subgroup of $G$. The group $G$ has a non-normal subgroup $H$ isomorphic 
to $D_6$ of order $12$.
The degree-$4$ irreducible character $\chi $ of $G$ is induced from 
linear characters $\beta$ of $H$ whose restriction to $Z(G)$ are nontrivial, and 
a non-rational degree-$2$ character $\phi $ of $N$.

A number-theoretic conclusion is the following. Inside $G$-extension $K/\Q$, we have
an equality between a non-abelian Artin $L$-function of $N$-extension 
associated to $\psi$ and a Hecke $L$-function of $H$-extension associated 
to $\psi $ whose kernel is isomorphic to $S_3$, and hence essentially a 
quadratic extension $K^{\ker \psi}/K^H$:
\[
 L (s, \phi, K/K^N) = L (s,\beta, K^{\ker \psi }/K^H).
\]
For example, the polynomial $f(X)=x^8 + x^7 - 3x^6 + x^5 + 8x^4 + x^2 + 7x + 1$ has the Galois 
group isomorphic to $G$, which gives rise to an Artin representation of degree $4$ with 
Artin conductor $283^2$.
Using Magma, the above $L$-series is given as a Dirichlet series
\[
L (s, \chi , K/\Q ) = \frac{1}{1^s} + \frac{1}{7^s} - \frac{1}{11^s} -\frac{1}{13^s}  -\frac{1}{16^s}
+\frac{1}{23^s} + \cdots.
\]
The least prime splitting completely in $K$ is $643$.
\end{example}

\begin{remark}
Two groups in Example \ref{ex:5.7} have a common feature.
While they do not contain a Camina triple, 
they do contain a nontrivial, proper subgroup $H$ and a proper normal subgroup $L$ of $H$ for which $H^g \cap H \le L$
for all $g \in G \setminus H$. Such groups are first studied by Wielandt as a generalization of Frobenius group and thus, sometimes called Frobenius-Wielandt groups.
Although Burkett and Lewis \cite[Theorem D]{MR4166586} give some information on the representation of such groups,
the above groups are not in the scope of the result.
\end{remark}

%\bibliographystyle{/Users/m.kida/Dropbox/doc/paper/mybib}
%\bibliography{/Users/m.kida/Dropbox/doc/paper/copied-paper}

\providecommand{\bysame}{\leavevmode\hbox to3em{\hrulefill}\thinspace}

\begin{flushleft}
Masanari Kida  (\texttt{kida@rs.tus.ac.jp})\\
 Kyohei Matsukawa \\
 Hikari Watanabe \\
 Department of Mathematics \\
 Faculty of Science Division I \\
Tokyo University of Science \\
Kagurazaka 1-3, Shinjuku, Tokyo, Japan
\end{flushleft}

\end{document}